\setlist{font=\normalfont,topsep=1ex,parsep=0ex}
\setlist[enumerate]{label=(\alph*)}
\numberwithin{equation}{section}
\numberwithin{table}{section}    
\numberwithin{figure}{section}
\crefname{figure}{Figure}{Figures}
\crefname{table}{Table}{Tables}
\crefname{assumption}{Assumption}{Assumptions}
\Crefname{ALC@unique}{Step}{Steps}
\newlist{alglist}{enumerate}{1}
\setlist[alglist]{topsep=1ex,parsep=0ex,leftmargin=*,label=\textbf{Step~\arabic*.}}
\newcommand{\R}{\mathbb{R}}
\newcommand\norm[1]{\left\Vert#1\right\Vert}
\newcommand{\N}{\mathbb{N}}
\newcommand{\dom}{\operatorname{dom}}
\newtheoremstyle{bolddef}{}{}{\normalfont}{}{\bfseries}{.}{ }{\thmname{#1}\thmnumber{ #2}\thmnote{ (#3)}}
\newtheoremstyle{boldplain}{}{}{\itshape}{}{\bfseries}{.}{ }{\thmname{#1}\thmnumber{ #2}\thmnote{ (#3)}}
\theoremstyle{bolddef}
\newtheorem{definition}{Definition}[section]
\newtheorem{algorithm}[definition]{Algorithm}
\newtheorem{assumption}[definition]{Assumption}
\newtheorem{remark}[definition]{Remark}
\theoremstyle{boldplain}
\newtheorem{lemma}[definition]{Lemma}
\newtheorem{theorem}[definition]{Theorem}
\newtheorem{proposition}[definition]{Proposition}
\newtheorem{corollary}[definition]{Corollary}
\newlength\figureheight
\newlength\figurewidth
\pgfplotsset{width=7cm,compat=1.3}
\definecolor{todocolor}{rgb}{1.0,0.0,0.0}
\newcommand\email[1]{\href{mailto:#1}{\texttt{#1}}}
\newcommand{\orcid}[1]{ORCID: \href{https://orcid.org/#1}{#1}}
\newcommand{\mscLink}[1]{\href{http://www.ams.org/mathscinet/msc/msc2020.html?t=#1}{#1}}
\begin{document}

\title{
	\bfseries\scshape 
	Convergence Properties of Monotone and Nonmonotone Proximal Gradient Methods Revisited
	}

\date{\today}

\author{Christian Kanzow
	\thanks{%
		University of Würzburg,
		Institute of Mathematics,
		97074 Würzburg,
		Germany,
		\email{kanzow@mathematik.uni-wuerzburg.de},
		\orcid{0000-0003-2897-2509}
	}
	\hspace*{-4mm} \and \hspace*{-4mm}
	Patrick Mehlitz
	\thanks{%
		Brandenburgische Technische Universität Cottbus-Senftenberg,
		Institute of Mathematics,
		03046 Cottbus,
		Germany,
		\email{mehlitz@b-tu.de},
		\orcid{0000-0002-9355-850X},
		University of Mannheim,
		School of Business Informatics and Mathematics,
		68159 Mannheim,
		Germany
	} 
}

\maketitle

{
\small\textbf{\abstractname.}
	Composite optimization problems, where the sum of a smooth and a merely lower semicontinuous
	function has to be minimized, are often tackled numerically by means of proximal gradient
	methods as soon as the lower semicontinuous part of the objective function is of simple enough
	structure. The available convergence theory associated with these methods (mostly) requires the
	derivative of the smooth part of the objective function to be (globally) Lipschitz continuous,
	and this might be a restrictive assumption in some practically relevant scenarios.
	In this paper, we readdress this classical topic and provide convergence results for
	the classical (monotone) proximal gradient method and one of its nonmonotone extensions which
	are applicable in the absence of (strong) Lipschitz assumptions.
		This is possible since, for the price of forgoing convergence rates, we omit 
		the use of descent-type lemmas in our analysis.
	
\par\addvspace{\baselineskip}
}

{
\small\textbf{Keywords.}
	Non-Lipschitz Optimization, Nonsmooth Optimization, Proximal Gradient Method
\par\addvspace{\baselineskip}
}

{
\small\textbf{AMS subject classifications.}
	\mscLink{49J52}, \mscLink{90C30}
\par\addvspace{\baselineskip}
}

\section{Introduction}\label{Sec:Intro}

In this paper, we address the classical problem of minimizing the sum
of a smooth function $f$ and a nonsmooth function $\phi$, 
also known under the name \emph{composite optimization}. 
This setting received much attention throughout 
the last years due to its inherent practical relevance in, e.g., machine learning, 
data compression, matrix completion, and image processing, see e.g.\ 
\cite{BianChen2015,BrucksteinDonohoElad2009,Chartrand2007,DiLorenzoLiuzziRinaldiSchoenSciandrone2012,LiuDaiMa2015,MarjanovicSolo2012}.

A standard technique for the solution of composite optimization problems is the 
proximal gradient method, introduced by Fukushima and Mine \cite{FukushimaMine1981}
and popularized e.g.\ by Combettes and Wajs in \cite{CombettesWajs2005}.
	A particular instance of this method is the celebrated
	iterative shrinkage/threshold algorithm (ISTA), see, e.g.\ \cite{BeckTeboulle2009}. 
A summary of existing
results for the case where the nonsmooth term $\phi$ is defined by a convex function
is given in the monograph by Beck \cite{Beck2017}.

The proximal gradient method can also be interpreted as a forward-backward splitting method, 
	see \cite{Bruck1977,Passty1979} for its origins 
	and \cite{BauschkeCombettes2011} for a modern view, 
and is able to handle problems where the 
nonsmooth term $\phi$ is given by a merely lower semicontinuous function, see, e.g.\ the seminal works 
\cite{AttouchBolteSvaiter2013,BolteSabachTeboulle2014}. 
These references also provide convergence and rate-of-convergence results
by using the 
	popular \emph{descent lemma} together with the celebrated 
	\emph{Kurdyka--\L ojasiewicz property}.

To the best of our knowledge, however, 
	the majority of available
convergence results for proximal gradient methods
assume that the smooth term $f$ is continuously differentiable 
with a globally Lipschitz continuous
gradient (or they require local Lipschitzness together with a bounded level set which,
again, implies the global Lipschitz continuity on this level set).
This requirement,
	which is the essential ingredient for the classical descent lemma,
is often satisfied for
standard applications of the proximal gradient method in data science and image processing, where
$f$ appears to be a quadratic function.

In this paper, we aim to get rid of this global Lipschitz condition. This is motivated by
the fact that the algorithmic application we have in mind
does not satisfy this Lipschitz property since the smooth term $f$ corresponds to the
augmented Lagrangian function of a general nonlinear constrained optimization problem, 
which rarely has a 
globally Lipschitz continuous gradient or a bounded level set. The proximal gradient method
will be used to solve the resulting subproblems which forces us to generalize the
convergence theory up to reasonable assumptions which are likely to hold in our framework. 
	We refer the interested reader to 
	\cite{ChenGuoLuYe2017,GuoDeng2021,JiaKanzowMehlitzWachsmuth2021,DeMarchiJiaKanzowMehlitz2022}
	where such augmented Lagrangian \emph{proximal} methods are investigated.

Numerically, a nonmonotone version of the proximal gradient method is often preferred.
Based on ideas by Grippo et al.\ \cite{GrippoLamparielloLucidi1986} in the context of
smooth unconstrained optimization problems, Wright et al.\ \cite{WrightNowakFigueiredo2009}
developed a nonmonotone proximal gradient method for composite optimization problems
known under the name SpaRSA. 
	In their paper, the authors assume that the nonsmooth part $\phi$ of the objective function
	is convex.	
	Almost simultaneously, the authors of \cite{BirginMartinezRaydan2000} presented 
	a nonmonotone projected gradient method for the minimization of a differentiable
	function over a convex set.
	Their findings can be interpreted as a special case of the results from
	\cite{WrightNowakFigueiredo2009} where $\phi$
	equals the indicator of a convex set.
The ideas from \cite{BirginMartinezRaydan2000,WrightNowakFigueiredo2009} 
were subsequently generalized in
the papers \cite{ChenGuoLuYe2017,ChenLuPong2016} where the proximal gradient method
is used as a subproblem solver within an augmented Lagrangian and penalization scheme,
respectively. However, the authors did not address the aforementioned problematic lack of Lipschitzness
in these papers which causes their convergence theory to be barely applicable in their algorithmic framework.
In \cite{LiLin2015,WangLiu2021}, the authors present nonmonotone extensions of ISTA which can handle merely
lower semicontinuous terms in the objective function. Again, for the convergence analysis, global Lipschitzness
of the smooth term's derivative is assumed.
Due to its practical importance, we therefore aim to provide a convergence theory for the nonmonotone proximal gradient 
method without using any Lipschitz assumption. 

In the seminal paper \cite{BauschkeBolteTeboulle2017},
the authors consider the composite optimization problem with
both terms being convex, but without a global Lipschitz
assumption for the gradient of the smooth part $f$. They get suitable
rate-of-convergence results for the iterates generated
by a Bregman-type proximal gradient method using only
a local Lipschitz condition. In addition, however, they require that
there is a constant $ L > 0 $ such that $ L h - f $ is
convex, where $ h $ is a convex function which defines the
Bregman distance (in our setting, $ h $ equals the squared norm). 
Some examples indicate that this convexity-type condition
is satisfied in many practically relevant situations. Subsequently, this
approach was generalized to the nonconvex setting in 
\cite{BolteSabachTeboulleVaisbourd2018} using, once again,
a local Lipschitz assumption only, as well as the slighty
stronger assumption (in order to deal with the nonconvexity)
that there exist $ L > 0 $ and a convex function $ h $ such
that both $ L h - f $ and $ L h + f $ are convex. Note that
the constant $ L $ plays a central role in the design of the
corresponding proximal-type methods. Particularly, it is used
explicitly for the choice of stepsizes. Finally, the very recent 
paper \cite{CohenHallakTeboulle2022} proves global convergence
results under a local Lipschitz assumption (without the
additional convexity-type condition), but assumes that the
iterates and stepsizes of the underlying proximal gradient method remain bounded.

To the best of our knowledge, this is the current state-of-the-art
regarding the convergence properties of proximal gradient methods.
The aim of this paper is slightly different, since we do not
provide rate-of-convergence results, but conditions which
guarantee accumulation points to be suitable stationary points
of the composite optimization problem. This is the essential feature of the proximal gradient method which, for example, is exploited in \cite{ChenGuoLuYe2017,JiaKanzowMehlitzWachsmuth2021,DeMarchiJiaKanzowMehlitz2022} to develop augmented Lagrangian proximal methods.
We also stress that, in this particular situation, the 
above assumption that $ L h \pm f $ is convex for some 
$ L > 0 $ is often violated unless we are dealing with linear
constraints only.

Our analysis does not 
require a global Lipschitz assumption and is not based
on the crucial descent lemma,  
contrasting \cite{BauschkeBolteTeboulle2017,BolteSabachTeboulleVaisbourd2018} mentioned above.
The results show that we can get stationary accumulation points only
under a local Lipschitz assumption and, depending on the
properties of $ \phi $, sometimes even without any
Lipschitz condition. In any case, a convexity-type condition
like $ L h \pm f $ being convex for some constant $ L $ is
not required at all. Moreover, the implementation of our
proximal gradient method does not need any knowledge of
the size of any Lipschitz-type constant.

Since the aim of this paper is to get a better understanding of the theoretical convergence
properties of both monotone and nonmonotone proximal gradient methods, and since these methods
have already been applied numerically to a large variety of problems, we do not include any
numerical results in this paper. 

Let us recall that we are mainly interested in conditions ensuring that accumulation points
of sequences produced by the proximal gradient method are stationary.
The main contributions of this paper show that this property 
holds (neglecting a few
technical conditions) for the monotone proximal gradient method if either the smooth function 
$f$ is continuously differentiable and the nonsmooth function $\phi$ is continuous on its domain
	(e.g., this assumption holds for a constrained optimization problem 
	where $\phi$ corresponds to the indicator function of a nonempty and closed set),
or if $f$ is differentiable with a locally Lipschitz continuous derivative and
$\phi$ is an arbitrary lower semicontinuous function. Corresponding statements 
for the nonmonotone proximal gradient method require stronger assumptions, basically the
uniform continuity of the objective function
	on a level set.
	That, however, is a standard assumption in the literature dealing with
	nonmonotone stepsize rules.

The paper is organized as follows: In \cref{Sec:Prelims}, we give a detailed
statement of the composite optimization problem and provide some necessary background
material from variational analysis. The convergence properties of the monotone and
nonmonotone proximal gradient method are then discussed in \cref{Sec:GenSpecGrad,Sec:GenSpecGradNM}, respectively. 
We close with some final remarks in \cref{Sec:Final}.

\section{Problem Setting and Preliminaries}\label{Sec:Prelims}

We consider the \emph{composite} optimization problem
\begin{equation}\label{Eq:P}\tag{P}
   \min_x \ \psi(x):=f(x) + \phi (x), \quad \quad x \in \mathbb X,
\end{equation}
where $ f\colon \mathbb{X} \to \mathbb{R} $ is
continuously differentiable, $ \phi\colon \mathbb{X} \to \overline{\mathbb{R}}:= \R\cup\{\infty\} $ 
is lower semicontinuous (possibly infinite-valued and nondifferentiable), 
and $ \mathbb{X} $ denotes a
Euclidean space, i.e., a real and finite-dimensional Hilbert space.
We assume that the domain $\dom\phi:=\{x\in\mathbb X\,|\,\phi(x)<\infty\}$ of $ \phi $
is nonempty to rule out trivial situations.
	In order to minimize the function $\psi\colon\mathbb X\to\overline\R$ in
	\eqref{Eq:P}, it seems reasonable to exploit the composite structure of $\psi$,
	i.e., to rely on the differentiability of $f$ on the one hand, and on some beneficial
	structural properties of $\phi$ on the other one. This is the idea behind splitting methods.

Throughout the paper, the Euclidean space $\mathbb X$ 
will be equipped with the inner product $\langle\cdot,\cdot\rangle\colon \mathbb X\times\mathbb X\to\R$
and the associated norm $\norm{\cdot}$.
For some set $A\subset\mathbb X$ and some point $x\in\mathbb X$, we make use of
$A+x=x+A:=\{x+a\,|\,a\in A\}$ for the purpose of simplicity.
For some sequence $\{x^k\}\subset\mathbb X$ and $x\in\mathbb X$, $x^k\to_\phi x$ means that
$x^k\to x$ and $\phi(x^k)\to\phi(x)$.
The continuous linear operator
$f'(x)\colon\mathbb X\to\R$ denotes the derivative of $f$ at $x\in\mathbb X$, and we will
make use of $\nabla f(x):=f'(x)^*1$ where $f'(x)^*\colon\R\to\mathbb X$ is the adjoint of $f'(x)$.
This way, $\nabla f$ is a mapping from $\mathbb X$ to $\mathbb X$.
Furthermore, we find $f'(x)d=\langle\nabla f(x),d\rangle$ for each $d\in\mathbb X$.
 
The following concepts are standard in variational analysis, see e.g.\ \cite{Mordukhovich2018,RockafellarWets2009}.
Let us fix some point $x\in\dom\phi$. Then
\[
	\widehat\partial\phi(x)
	:=
	\left\{
		\eta\in\mathbb X\,\middle|\,
		\liminf\limits_{y\to x}\frac{\phi(y)-\phi(x)-\langle \eta,y-x\rangle}{\norm{y-x}}\geq 0
	\right\}
\] 
is called the {\em regular} (or {\em Fr\'{e}chet}) {\em subdifferential} of $\phi$ at $x$.
Furthermore, the set
\[
	\partial\phi(x)
	:=
	\left\{
		\eta\in\mathbb X\,\middle|\,
		\exists\{x^k\},\{\eta^k\}\subset\mathbb X\colon x^k\to_\phi x,\,\eta^k\to\eta,\,\eta^k\in\widehat{\partial}\phi(x^k)\,\forall k\in\N
	\right\}
\]
is well known as the {\em limiting} (or {\em Mordukhovich}) 
{\em subdifferential} of $\phi$ at $x$.
Clearly, we always have $\widehat{\partial}\phi(x)\subset\partial\phi(x)$ by construction.
Whenever $\phi$ is convex, equality holds, and both subdifferentials coincide with the
subdifferential of convex analysis, i.e.,
\[
	\widehat{\partial}\phi(x)
	=
	\partial\phi(x)
	=
	\{
		\eta\in\mathbb X\,|\,\forall y\in\mathbb X\colon\,\phi(y)\geq\phi(x)+\langle\eta,y-x\rangle
	\}
\]
holds in this situation.
It can be seen right from the definition that whenever $x^*\in\dom\phi$ is a local minimizer
of $\phi$, then $0\in\widehat\partial\phi(x^*)$, which is referred to as Fermat's rule,
see \cite[Proposition~1.30(i)]{Mordukhovich2018}.

Given $ x \in \dom\phi $, the limiting subdifferential has the important robustness
property 
\begin{equation}\label{Eq:robustness}
   \left\{ 
   	\eta\in\mathbb X\,\middle|\,
   	\exists\{x^k\},\{\eta^k\}\subset\mathbb X\colon\,x^k\to_\phi x,\,\eta^k\to\eta,\,\eta^k\in\partial\phi(x^k)\,\forall k\in\N
   \right\}
   \subset
   \partial\phi(x),
\end{equation}
see \cite[Proposition~1.20]{Mordukhovich2018}. 
Clearly, the converse inclusion $\supset$ is also valid by definition of the limiting subdifferential.
Note that in situations where $\phi$ is discontinuous at $x$, the requirement $x^k\to_\phi x$ in the definition
of the set on the left-hand side in \eqref{Eq:robustness} is strictly necessary. 
In fact, the usual outer semicontinuity in the sense of set-valued mappings, given by
\begin{equation}\label{Eq:osc}
   \left\{ 
   	\eta\in\mathbb X\,\middle|\,
   	\exists\{x^k\},\{\eta^k\}\subset\mathbb X\colon\,x^k\to x,\,\eta^k\to\eta,\,\eta^k\in\partial\phi(x^k)\,\forall k\in\N
   \right\}
   \subset
   \partial\phi(x),
\end{equation}
would be a much stronger condition in this situation and does not hold in general.

Whenever $x\in\dom\phi$ is fixed, the sum rule
\begin{equation}
		\label{eq:regular_sum_rule}
		\widehat{\partial}(f+\phi)(x)
		=
		\nabla f(x)+\widehat{\partial}\phi(x)
\end{equation}
holds, see \cite[Proposition~1.30(ii)]{Mordukhovich2018}.
Thus, due to Fermat's rule, whenever $x^*\in\dom\phi$ is a local minimizer of $f+\phi$, we have
$0\in \nabla f (x^*)+\widehat{\partial}\phi(x^*)$.
This condition is potentially more restrictive than
$0\in \nabla f (x^*)+\partial\phi(x^*)$ which, naturally, also serves as a necessary optimality
condition for \eqref{Eq:P}. However, the latter is more interesting from an
algorithmic point of view as it is well known from the literature on splitting methods
comprising nonconvex functions $\phi$. If $\phi$ is convex, there is no difference between
those stationarity conditions.

Throughout the paper, a point $x^*\in\dom\phi$ satisfying
$0\in \nabla f (x^*)+\partial\phi(x^*)$ will be called a 
{\em Mordukhovich-stationary} ({\em M-stationary} for short) point of 
\eqref{Eq:P} due to the appearance of the limiting subdifferential. In the literature, the name 
\emph{limiting critical point} is used as well. We close this section with two special instances
of problem \eqref{Eq:P} and comment on the corresponding M-stationary conditions.

\begin{remark}\label{Rem:constrained_opt}
Consider the constrained optimization problem
\[
   \min_x \ f(x) \quad \text{subject to} \quad x \in C
\]
for a continuously differentiable function $ f\colon\mathbb X\to\R $ and a nonempty and closed 
(not necessarily convex) set $ C \subset \mathbb{X} $. This problem is equivalent 
to the unconstrained problem \eqref{Eq:P} by setting $ \phi  := \delta_C $, 
where $ \delta_C \colon \mathbb X\to\overline{\R} $ denotes the indicator function of the set $C$, vanishing
on $C$ and taking the value $\infty$ on $\mathbb X\setminus C$, which is lower semicontinuous due to the 
assumptions regarding $ C $.
The corresponding M-stationarity condition is given by
\[
   0 \in \nabla f (x^*) + \partial \delta_C (x^*) = \nabla f (x^*) + \mathcal N_C (x^*),
\]
where $\mathcal N_C(x^*) $ denotes the \emph{limiting} (or \emph{Mordukhovich}) \emph{normal cone}, 
see \cite[Proposition~1.19]{Mordukhovich2018}.
\end{remark}

\begin{remark}\label{rem:non_Lipschitz_constrained_optimization}
Consider the more general constrained optimization problem
\[
   \min_x \ f(x) + \varphi (x) \quad \text{subject to} \quad x \in C
\]
with $ f\colon\mathbb X\to\R$ and $ C\subset\mathbb X $ as in \Cref{Rem:constrained_opt},
and $ \varphi\colon\mathbb X\to\overline{\R} $ being another
lower semicontinuous function (which might represent a regularization, penalty, or sparsity-promoting term, for
example). Setting $ \phi := \varphi + \delta_C $, we obtain
once again an optimization problem of the form \eqref{Eq:P}. The corresponding
M-stationarity condition is given by
\[
   0 \in \nabla f (x^*) + \partial \phi (x^*) = \nabla f (x^*) + \partial ( \varphi + \delta_C ) (x^*).
\]
Unfortunately, the sum rule   
\[
	\partial ( \varphi + \delta_C ) (x^*) 
	\subset
	\partial \varphi (x^*) + \partial \delta_C (x^*) 
	=
	\partial \varphi (x^*) + \mathcal N_C(x^*)
\] 
does not hold in general. However,
for locally Lipschitz functions $ \varphi $, for example, it applies,
see \cite[Theorems~1.22, 2.19]{Mordukhovich2018}.
Note that the resulting stationarity condition
\[
   0 \in \nabla f (x^*) + \partial \varphi (x^*) + \mathcal N_C (x^*)
\]
might be slightly weaker than M-stationarity as introduced above.
Related discussions can be found in \cite[Section~3]{GuoYe2018}.
\end{remark}

\section{Monotone Proximal Gradient Method}\label{Sec:GenSpecGrad}

We first investigate a monotone version
of the proximal gradient method applied to the composite optimization problem
\eqref{Eq:P} with $ f $ being continuously differentiable and $ \phi $ being lower semicontinuous.
Recall that the corresponding M-stationarity condition is given by
\[
   0 \in \nabla f  (x) + \partial \phi (x).
\]
Our aim is to find, at least approximately, an M-stationary point of \eqref{Eq:P}.
The following algorithm is the classical proximal gradient method for this class of problems.
Since we will also consider a nonmonotone variant of this algorithm in the following
section, we call this the monotone proximal gradient method.

\begin{algorithm}[Monotone Proximal Gradient Method]\leavevmode
	\label{Alg:MonotoneProxGrad}
	\begin{algorithmic}[1]
		\REQUIRE $ \tau > 1, 0 < \gamma_{\min} \leq  \gamma_{\max} < \infty, \delta \in (0,1), x^0 \in \dom\phi $
		\STATE Set $k := 0$.
		\WHILE{A suitable termination criterion is violated at iteration $ k $}
		\STATE Choose $ \gamma_k^0 \in [ \gamma_{\min}, \gamma_{\max}] $.
		\STATE\label{step:subproblem_solve_MonotoneProxGrad} 
			For $ i = 0, 1, 2, \ldots $, compute a solution $ x^{k,i} $ of
      		\begin{equation}\label{Eq:Subki}
         		\min_x \ f (x^k) + \langle\nabla f(x^k), x - x^k \rangle + \frac{\gamma_{k,i}}{2} \| x - x^k \|^2 + \phi (x), 
         		\quad x \in \mathbb X
      		\end{equation}
      		with $ \gamma_{k,i} := \tau^i \gamma_k^0 $, until the acceptance criterion
      		\begin{equation}\label{Eq:StepCrit}
         		\psi (x^{k,i}) \leq 
         		\psi (x^k) - \delta \frac{\gamma_{k,i}}{2} \| x^{k,i} - x^k \|^2 
      		\end{equation}
      		holds.
		\STATE Denote by $ i_k := i $ the terminal value, and set $ \gamma_k := 
      			\gamma_{k,i_k} $ and $ x^{k+1} := x^{k,i_k} $.
      	\STATE Set $ k \leftarrow k + 1 $.
		\ENDWHILE
		\RETURN $x^k$
	\end{algorithmic}
\end{algorithm}

The convergence theory requires some technical assumptions.

\begin{assumption}\label{Ass:ProxGradMonotone}
\begin{enumerate}
   \item \label{item:psi_bounded} The function $ \psi $ is bounded from below on $ \dom\phi $.
   \item \label{item:phi_bounded_affine} The function  $ \phi $ is bounded from below by an affine function.
\end{enumerate}
\end{assumption}

\Cref{Ass:ProxGradMonotone}~\ref{item:psi_bounded} is a reasonable condition regarding the
given composite optimization problem, whereas \Cref{Ass:ProxGradMonotone}~\ref{item:phi_bounded_affine}
is essentially a statement relevant for the subproblems from \eqref{Eq:Subki}. In particular,
\Cref{Ass:ProxGradMonotone}~\ref{item:phi_bounded_affine} implies that the quadratic objective function
of the subproblems \eqref{Eq:Subki} are, for fixed $ k, i \in\N$, coercive, and therefore always
attain a solution $ x^{k,i} $ (which, however, may not be unique).

The subsequent convergence theory assumes implicitly that \Cref{Alg:MonotoneProxGrad}
generates an infinite sequence. 

We first establish that the stepsize rule in \cref{step:subproblem_solve_MonotoneProxGrad} of \cref{Alg:MonotoneProxGrad} is always finite.

\begin{lemma}\label{Lem:StepsizeFinite}
Consider a fixed iteration $ k $ of \cref{Alg:MonotoneProxGrad}, assume that $ x^k $ is not an M-stationary point of
\eqref{Eq:P}, and suppose that \Cref{Ass:ProxGradMonotone}~\ref{item:phi_bounded_affine} holds. Then the inner 
loop in \cref{step:subproblem_solve_MonotoneProxGrad} of \Cref{Alg:MonotoneProxGrad} is finite, i.e., we have $ \gamma_k = \gamma_{k,i_k} $
for some finite index $ i_k \in \{ 0, 1, 2, \ldots \} $.
\end{lemma}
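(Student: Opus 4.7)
The plan is to argue by contradiction: suppose the inner loop never accepts, so the criterion \eqref{Eq:StepCrit} fails for every $i \in \mathbb{N}$ and $\gamma_{k,i} = \tau^i \gamma_k^0 \to \infty$. A preliminary observation is that if $x^{k,i} = x^k$ for some $i$, then \eqref{Eq:StepCrit} is satisfied trivially; hence I may assume $d^i := x^{k,i} - x^k \neq 0$ throughout, and the task is to extract a contradiction with the non-M-stationarity of $x^k$.

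The first substantive step is to show $\|d^i\| \to 0$. Exploiting the global optimality of $x^{k,i}$ for the subproblem \eqref{Eq:Subki} and inserting the competitor $x^k$ gives
\[
\langle \nabla f(x^k), d^i \rangle + \tfrac{\gamma_{k,i}}{2} \|d^i\|^2 + \phi(x^{k,i}) \leq \phi(x^k).
\]
Using Cauchy--Schwarz on the inner product and \Cref{Ass:ProxGradMonotone}~\ref{item:phi_bounded_affine} to bound $-\phi(x^{k,i})$ from above by an affine function of $d^i$, this rearranges into a quadratic inequality in $\|d^i\|$ whose constant and linear coefficient are independent of $i$, while the quadratic coefficient is $\gamma_{k,i}/2 \to \infty$. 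Solving the quadratic forces $\|d^i\| = O(1/\sqrt{\gamma_{k,i}}) \to 0$.

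Next, subtracting the subproblem inequality above from the assumed failure of \eqref{Eq:StepCrit} eliminates the $\phi$-values and yields
\[
f(x^{k,i}) - f(x^k) - \langle \nabla f(x^k), d^i\rangle > (1-\delta)\tfrac{\gamma_{k,i}}{2}\|d^i\|^2.
\]
Dividing by $\|d^i\| > 0$ and invoking differentiability of $f$ at $x^k$ (so that the left-hand side is $o(1)$ since $d^i \to 0$), I conclude $\gamma_{k,i}\|d^i\| \to 0$, and therefore also $\gamma_{k,i}\|d^i\|^2 \to 0$. Feeding this back into the subproblem inequality gives $\limsup_i \phi(x^{k,i}) \leq \phi(x^k)$, which together with lower semicontinuity of $\phi$ upgrades to $\phi(x^{k,i}) \to \phi(x^k)$, i.e., $x^{k,i} \to_\phi x^k$.

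The conclusion then comes from first-order optimality of $x^{k,i}$ in \eqref{Eq:Subki}: applying Fermat's rule together with the regular sum rule \eqref{eq:regular_sum_rule} (the quadratic term being smooth) produces
\[
\eta^i := -\nabla f(x^k) - \gamma_{k,i} d^i \in \widehat\partial\phi(x^{k,i}) \subset \partial\phi(x^{k,i}).
\]
Since $\eta^i \to -\nabla f(x^k)$ and $x^{k,i} \to_\phi x^k$, the robustness property \eqref{Eq:robustness} of the limiting subdifferential delivers $-\nabla f(x^k) \in \partial\phi(x^k)$, contradicting the assumption that $x^k$ is not M-stationary. I expect the main obstacle to be upgrading mere convergence $x^{k,i} \to x^k$ to $\phi$-convergence $x^{k,i} \to_\phi x^k$, since only the latter activates \eqref{Eq:robustness}; this upgrade depends critically on obtaining $\gamma_{k,i}\|d^i\|^2 \to 0$ from the failure of the acceptance test, rather than just the boundedness one obtains from the subproblem comparison alone.
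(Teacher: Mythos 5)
Your proof is correct and rests on exactly the same ingredients as the paper's argument: comparing $x^{k,i}$ with the competitor $x^k$ in \eqref{Eq:Subki} to obtain $x^{k,i}\to x^k$ and $\phi(x^{k,i})\to\phi(x^k)$, combining a Taylor expansion of $f$ with the negated test \eqref{Eq:StepCrit} to control $\gamma_{k,i}\|x^{k,i}-x^k\|$, and feeding the subproblem's first-order condition into the robustness property \eqref{Eq:robustness}. The only difference is the logical arrangement: you derive $\gamma_{k,i}\|x^{k,i}-x^k\|\to 0$ directly from persistent rejection and conclude M-stationarity, whereas the paper assumes non-stationarity to bound $\gamma_{k,i}\|x^{k,i}-x^k\|$ away from zero along every subsequence and then verifies that \eqref{Eq:StepCrit} must eventually hold --- a contrapositive of the same implication, so yours is a mild streamlining that avoids the paper's nested contradiction. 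One small inaccuracy in your closing remark (harmless for the proof itself): the upgrade to $\phi$-convergence does not depend on $\gamma_{k,i}\|x^{k,i}-x^k\|^2\to 0$, since the quadratic term in the subproblem inequality has a favorable sign and can simply be dropped once $x^{k,i}\to x^k$ is known; what genuinely requires the failure of the acceptance test is the convergence $\gamma_{k,i}(x^{k,i}-x^k)\to 0$ of the subgradients $\eta^i$ used in the robustness step.
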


\begin{proof}
	Suppose that the inner loop of \cref{Alg:MonotoneProxGrad} does not
	terminate after a finite number of steps in iteration $k$.
Recall that $ x^{k,i} $ is a solution of \eqref{Eq:Subki}. Therefore, we get
\begin{equation}\label{Eq:Sub1i}
   \langle \nabla f(x^k),x^{k,i} - x^k\rangle +    \frac{\gamma_{k,i}}{2} \| x^{k,i} - x^k \|^2 + \phi (x^{k,i}) \leq \phi (x^k).
\end{equation}
Noting that $ \gamma_{k,i} \to \infty $ for $ i \to \infty $ and using 
\Cref{Ass:ProxGradMonotone}~\ref{item:phi_bounded_affine}, we obtain $ x^{k,i} \to x^k $ for $ i \to \infty $. 
Taking the limit $ i \to \infty $ therefore yields
\begin{equation*}
   \phi (x^k) \leq \liminf_{i \to \infty} \phi (x^{k,i}) \leq 
   \limsup_{i \to \infty} \phi (x^{k,i}) \leq \phi (x^k),
\end{equation*}
where the first estimate follows from the lower semicontinuity of $ \phi $
and the final inequality is a consequence of \eqref{Eq:Sub1i}.
Therefore, we have
\begin{equation}\label{eq:robustsum}
   \phi (x^{k,i}) \to \phi (x^k) \quad
   \text{for } i \to \infty .
\end{equation}
We claim that
\begin{equation}\label{Eq:Sub2i}
   \liminf_{i \to \infty} \gamma_{k,i} \| x^{k,i} - x^k \| > 0.
\end{equation}
Assume, by contradiction, that there is a subsequence $ i_l \to \infty $ such that 
\begin{equation}\label{Eq:Sub3i}
   \liminf_{l \to \infty} \gamma_{k, i_l} \| x^{k, i_l} - x^k \| = 0. 
\end{equation}
Since $ x^{k,i_l} $ is optimal for \eqref{Eq:Subki}, Fermat's rule and the sum rule
\eqref{eq:regular_sum_rule} yield
\begin{equation}\label{eq:optimality_condition_subproblem}
   0 \in  \nabla f (x^k) + \gamma_{k, i_l} ( x^{k, i_l} - x^k ) +
   \widehat\partial \phi (x^{k,i_l})
\end{equation}
for all $l\in\N$.
Taking the limit $ l \to \infty $ while using \eqref{eq:robustsum} and \eqref{Eq:Sub3i},
we obtain
\begin{equation*}
   0 \in  \nabla f (x^k) + \partial \phi (x^k),
\end{equation*}
which means that $ x^k $ is already an M-stationary point of \eqref{Eq:P}. This 
contradiction shows that \eqref{Eq:Sub2i} holds. Hence, there is a constant $ c > 0 $
such that
\begin{equation*}
   \gamma_{k,i} \| x^{k,i} - x^k \| \geq c 
\end{equation*}
holds for all large enough $i\in\N$.
In particular, this implies
\begin{equation}\label{Eq:Sub4i}
   ( 1- \delta ) \frac{\gamma_{k,i}}{2} \| x^{k,i} - x^k \|^2 \geq 
   \frac{1- \delta}{2} c \| x^{k,i} - x^k \| \geq o \big( \| x^{k,i} - x^k \| \big)
\end{equation}
for all sufficiently large $i\in\N$.
Furthermore, \eqref{Eq:Sub1i} shows that
\begin{equation}\label{Eq:Sub5i}
	\langle \nabla f (x^k),x^{k,i} - x^k\rangle +
   \phi \big( x^{k,i} \big) - \phi (x^k)
   \leq
   - \frac{\gamma_{k,i}}{2} \| x^{k,i} - x^k \|^2 .
\end{equation}
Using a Taylor expansion of the 
	function $ f $ 
and exploiting \eqref{Eq:Sub4i},
\eqref{Eq:Sub5i}, we obtain 
\begin{align*}
   \psi ( x^{k,i}) - \psi (x^k)
   & =  
   f ( x^{k,i}) + \phi (x^{k,i}) - f (x^k) - \phi (x^k) \\
   & =  
   \langle \nabla f (x^k),x^{k,i}-x^k\rangle + \phi ( x^{k,i} ) - \phi (x^k) +
   		o \big( \| x^{k,i} - x^k \| \big) \\
   & \leq  
   - \frac{\gamma_{k,i}}{2} \| x^{k,i} - x^k \|^2 + 
   		o \big( \| x^{k,i} - x^k \| \big) \\
   & \leq  
   - \delta \frac{\gamma_{k,i}}{2} \| x^{k,i} - x^k \|^2
\end{align*}
for all $ i \in \N $ sufficiently large.
	This, however, means that the acceptance criterion 
	\eqref{Eq:StepCrit} is valid for sufficiently
	large $i\in\N$,
	contradicting our assumption.
	This completes the proof.
\end{proof}

Let us note that the above proof actually shows that the inner loop from
\cref{step:subproblem_solve_MonotoneProxGrad} of \cref{Alg:MonotoneProxGrad}
is either finite, or we have $\gamma_{k,i_l}\|x^{k,i_l}-x^k\|\to 0$ along
a subsequence $i_l\to\infty$. 
Rewriting \eqref{eq:optimality_condition_subproblem} by means of
\begin{equation}\label{eq:rewritten_optimality_condition_subproblem}
	\nabla f(x^{k,i_l})-\nabla f(x^k)+\gamma_{k,i_l}(x^k-x^{k,i_l})
	\in 
	\nabla f(x^{k,i_l})+\widehat\partial\phi(x^{k,i_l})
\end{equation}
and recalling that $\nabla f\colon\mathbb X\to\mathbb X$ is continuous motivates to also use
\[
	\|\nabla f(x^{k,i})-\nabla f(x^k)+\gamma_{k,i}(x^k-x^{k,i})\|\leq\tau_\textup{abs}
\]
for some $\tau_\textup{abs}>0$ as a termination criterion of the inner loop since this
encodes, in some sense, approximate M-stationarity of $x^{k,i}$ for \eqref{Eq:P}
(note that taking the limit $l\to\infty$ in \eqref{eq:rewritten_optimality_condition_subproblem} 
would recover the limiting subdifferential of $\phi$ at $x^k$
since we have $x^{k,i_l}\to_\phi x^k$ by \eqref{eq:robustsum}).

A critical step for the convergence theory of \Cref{Alg:MonotoneProxGrad} is
provided by the following result.

\begin{proposition}\label{Prop:xdiff}
Let \Cref{Ass:ProxGradMonotone} hold. Then each sequence 
$ \{ x^k \} $ generated by \Cref{Alg:MonotoneProxGrad} satisfies
$ \| x^{k+1} - x^k \| \to 0 $.
\end{proposition}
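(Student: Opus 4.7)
The proof proposal rests on the fact that the acceptance criterion \eqref{Eq:StepCrit} already gives us a sufficient decrease of the merit function $\psi$ at every iteration, and that the stepsize parameters $\gamma_k$ are bounded below by $\gamma_{\min}>0$ by construction. This makes a classical telescoping argument possible, so the statement should follow without any Lipschitz-type assumption.

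The plan is as follows. First, I would observe that every iterate $x^k$ lies in $\dom\phi$: this holds for $x^0$ by assumption, and inductively every minimizer $x^{k+1}$ of \eqref{Eq:Subki} must have finite $\phi$-value (otherwise the subproblem objective at $x^{k+1}$ would be $+\infty$, contradicting the fact that the value at $x=x^k$ is finite). Combined with \Cref{Ass:ProxGradMonotone}~\ref{item:psi_bounded}, the sequence $\{\psi(x^k)\}$ is bounded from below. By the acceptance criterion \eqref{Eq:StepCrit}, namely
\[
	\psi(x^{k+1})\leq \psi(x^k)-\delta\,\frac{\gamma_k}{2}\|x^{k+1}-x^k\|^2,
\]
the sequence $\{\psi(x^k)\}$ is also monotonically nonincreasing, and hence convergent to some finite limit $\psi^\star\in\R$.

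Next, I would telescope \eqref{Eq:StepCrit} over $k=0,1,\dots,N$ to obtain
\[
	\sum_{k=0}^N \delta\,\frac{\gamma_k}{2}\|x^{k+1}-x^k\|^2
	\leq \psi(x^0)-\psi(x^{N+1})\leq \psi(x^0)-\psi^\star<\infty.
\]
Letting $N\to\infty$ yields summability of $\gamma_k\|x^{k+1}-x^k\|^2$. Since the construction of \cref{Alg:MonotoneProxGrad} guarantees $\gamma_k = \tau^{i_k}\gamma_k^0\geq \gamma_k^0\geq \gamma_{\min}>0$ for every $k$, we may divide by $\gamma_{\min}$ to deduce $\sum_{k=0}^\infty \|x^{k+1}-x^k\|^2<\infty$, which in particular forces $\|x^{k+1}-x^k\|\to 0$.

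I do not anticipate a genuine obstacle here. The delicate point one should be careful about is that $\gamma_k$ may become arbitrarily large across iterations (no uniform upper bound on $\gamma_k$ is enforced by the algorithm), but for the present claim we only need a \emph{lower} bound on $\gamma_k$, which is supplied by the parameter $\gamma_{\min}$. Note in particular that \Cref{Lem:StepsizeFinite} is not needed for this proposition beyond the fact that the inner loop terminates (so that the iteration is well-defined whenever $x^k$ is not yet M-stationary); the sufficient decrease built into the acceptance test is doing all the work. \Cref{Ass:ProxGradMonotone}~\ref{item:phi_bounded_affine} enters only implicitly, through the well-posedness of the subproblems that defines the iterates.
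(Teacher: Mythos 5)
Your proposal is correct and follows essentially the same route as the paper: both arguments combine the sufficient decrease from the acceptance criterion \eqref{Eq:StepCrit} with the lower bound $\psi\geq\inf$ from \Cref{Ass:ProxGradMonotone}~\ref{item:psi_bounded} to conclude $\gamma_k\|x^{k+1}-x^k\|^2\to 0$, and then invoke $\gamma_k\geq\gamma_{\min}>0$. The only cosmetic difference is that you telescope to get summability of $\|x^{k+1}-x^k\|^2$, whereas the paper passes directly from convergence of the monotone sequence $\{\psi(x^k)\}$ to the vanishing of consecutive differences; both yield the claim.
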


\begin{proof}
First recall that the sequence $ \{ x^k \} $ is well-defined by \Cref{Lem:StepsizeFinite}.
Using the acceptance criterion \eqref{Eq:StepCrit}, we get
\begin{equation}\label{Eq:xdiff-1}
   \psi (x^{k+1}) \leq \psi (x^k) - \delta \frac{\gamma_k}{2} \| x^{k+1} - x^k \|^2 
   \leq \psi (x^k)
\end{equation}
for all $ k \in \mathbb{{N}} $. Hence, the sequence $ \{ \psi (x^k) \} $ is 
monotonically decreasing. Since $ \psi $ is bounded from below on $ \dom\phi $
by \Cref{Ass:ProxGradMonotone}~\ref{item:psi_bounded} and $ \{ x^k \} \subset \dom\phi $, it follows that
this sequence is convergent. Therefore, \eqref{Eq:xdiff-1} implies
\begin{equation*}
   \gamma_k \| x^{k+1} - x^k \|^2 \to 0 \quad \text{for } k \to \infty .
\end{equation*}
Hence the assertion follows from the fact that, by 
construction, we have $ \gamma_k \geq \gamma_{\min} > 0 $ for all $ k \in \mathbb{N} $.
\end{proof}

A refined analysis gives the following result.

\begin{proposition}\label{Prop:gammaxdiff}
Let \Cref{Ass:ProxGradMonotone} hold, let $\{x^k\}$ be a sequence generated by \cref{Alg:MonotoneProxGrad}, 
and let $ \{ x^k \}_K $ be a subsequence converging
to some point $ x^* $. Then $ \gamma_k \| x^{k+1} - x^k \| \to_K 0 $ holds.
\end{proposition}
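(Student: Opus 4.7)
The plan is to split the analysis according to the behavior of $\{\gamma_k\}_K$. If $\gamma_k$ remains bounded along some subsubsequence of $K$, then $\gamma_k\|x^{k+1}-x^k\|\to 0$ there is immediate from \cref{Prop:xdiff}. Passing to a further subsubsequence, it therefore suffices to handle the case $\gamma_k\to_K\infty$.

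In this regime, for all sufficiently large $k\in K$ we have $\gamma_k>\tau\gamma_{\max}$, so the terminal inner-loop index $i_k$ is at least $1$, and the previous trial point $y^k:=x^{k,i_k-1}$, computed with $\tilde\gamma_k:=\gamma_k/\tau$, violates the acceptance test \eqref{Eq:StepCrit}. The first substantive step is to verify that $\|y^k-x^k\|\to_K 0$. Note that $\phi(x^k)$ is bounded along $K$: indeed, $\{\psi(x^k)\}$ is monotonically decreasing and bounded below by \cref{Ass:ProxGradMonotone}~\ref{item:psi_bounded}, while $f(x^k)\to f(x^*)$ by continuity. Combining the subproblem inequality for $y^k$ tested against $x^k$ with the affine minorization from \cref{Ass:ProxGradMonotone}~\ref{item:phi_bounded_affine} and the boundedness of $\phi(x^k)$, $x^k$, and $\nabla f(x^k)$, one obtains an estimate of the form $\tfrac{\tilde\gamma_k}{2}\|y^k-x^k\|^2\leq M+C\|y^k-x^k\|$ with constants $M,C$ independent of $k$, which forces $\|y^k-x^k\|\to 0$ since $\tilde\gamma_k\to\infty$.

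With $y^k\to_K x^*$ available, the Taylor expansion $f(y^k)-f(x^k)=\langle\nabla f(x^k),y^k-x^k\rangle+o(\|y^k-x^k\|)$ is legitimate. Inserting it into the failed acceptance inequality $\psi(y^k)>\psi(x^k)-\delta\tfrac{\tilde\gamma_k}{2}\|y^k-x^k\|^2$ and using the subproblem inequality to bound $\phi(y^k)-\phi(x^k)$ from above---exactly as in the final chain of estimates in the proof of \cref{Lem:StepsizeFinite}---yields $(1-\delta)\tfrac{\tilde\gamma_k}{2}\|y^k-x^k\|^2\leq o(\|y^k-x^k\|)$, whence $\tilde\gamma_k\|y^k-x^k\|\to_K 0$. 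To pass from $y^k$ to $x^{k+1}$, observe that both are minimizers of \eqref{Eq:Subki} for the same base point $x^k$ but with differing quadratic coefficients $\gamma_k>\tilde\gamma_k$; adding the two resulting minimization inequalities cancels the shared linear and $\phi$ terms and leaves $(\gamma_k-\tilde\gamma_k)\bigl(\|x^{k+1}-x^k\|^2-\|y^k-x^k\|^2\bigr)\leq 0$, so that $\|x^{k+1}-x^k\|\leq\|y^k-x^k\|$. Combining, $\gamma_k\|x^{k+1}-x^k\|\leq\tau\tilde\gamma_k\|y^k-x^k\|\to_K 0$, as required. The main obstacle is the derivation of $\|y^k-x^k\|\to 0$, since this is precisely what makes the Taylor expansion applicable; the affine lower bound on $\phi$ is the crucial ingredient converting the sum-type subproblem estimate into a quantitative decay in $\tilde\gamma_k$.
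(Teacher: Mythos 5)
Your proposal is correct and follows essentially the same route as the paper's proof: the same reduction to the case $\gamma_k\to_K\infty$, the same use of the affine minorant of $\phi$ to force $\|y^k-x^k\|\to_K 0$, the same contradiction with the failed acceptance test, and the same comparison of the two subproblem minimizers to transfer the estimate from $y^k$ to $x^{k+1}$. The only cosmetic difference is that where you write $o(\|y^k-x^k\|)$ for a Taylor remainder with a \emph{varying} base point $x^k$, the paper makes the required uniformity explicit via the mean-value point $\xi^k$ and the continuity of $\nabla f$ at $x^*$ (using that $x^k$, $y^k$, hence $\xi^k$, all converge to $x^*$) --- a justification you correctly identify as resting on $y^k\to_K x^*$.
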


\begin{proof}
If the subsequence $ \{ \gamma_k \}_K $ is bounded, the statement follows
immediately from \Cref{Prop:xdiff}. The remaining part of this proof
therefore assumes that this subsequence is unbounded. Without loss of 
generality, we may assume that $ \gamma_k  \to_K \infty $
	and that the acceptance criterion \eqref{Eq:StepCrit} is violated
	in the first iteration of the inner loop for each $k\in K$.
Then, for $ \hat{\gamma}_k := \gamma_k / \tau $, $k\in K$, we also have $\hat\gamma_k\to_K\infty$, but the 
corresponding vector $ \hat x^k := x^{k, i_k-1} $ does not satisfy the stepsize
condition from \eqref{Eq:StepCrit}, i.e., we have
\begin{equation}\label{Prop:gammaxdiff-1}
   \psi (\hat x^k) > 
   \psi (x^k) - \delta \frac{\hat{\gamma}_k}{2} \| \hat{x}^k - x^k \|^2 
   \quad \forall k \in K.
\end{equation}
On the other hand, since $ \hat{x}^k $ solves the corresponding 
subproblem \eqref{Eq:Subki} with $\hat{\gamma}_k = \gamma_{k,i_k-1}$, we have
\begin{equation}\label{Prop:gammaxdiff-2}
    \langle \nabla f (x^k), \hat{x}^k - x^k\rangle + \frac{\hat{\gamma}_k}{2}
   \| \hat{x}^k - x^k \|^2 + \phi ( \hat{x}^k ) - \phi (x^k) \leq 0 .
\end{equation}
We claim that this, in particular, implies $ \hat{x}^k \to_K x^* $. In fact, using
\eqref{Prop:gammaxdiff-2}, the Cauchy-Schwarz inequality, and the monotonicity of
$ \{ \psi (x^k) \} $, we obtain
\begin{align*}
   \frac{\hat{\gamma}_k}{2} \| \hat{x}^k - x^k \|^2 
   & \leq 
   \| \nabla f (x^k) \| \| \hat{x}^k - x^k \| + \phi (x^k) - \phi ( \hat{x}^k ) \\
   & = 
   \| \nabla f (x^k) \| \| \hat{x}^k - x^k \| + \psi (x^k) - f (x^k) - \phi ( \hat{x}^k ) \\
   & \leq 
   \| \nabla f (x^k) \| \| \hat{x}^k - x^k \| + \psi (x^0) - f (x^k) - \phi ( \hat{x}^k ) .
\end{align*}
Since $ f $ is continuously differentiable and $ - \phi $ is bounded from above by 
an affine function in view of \Cref{Ass:ProxGradMonotone}~\ref{item:phi_bounded_affine}, this implies 
$ \| \hat{x}^k - x^k \| \to_K 0 $.
In fact, if $ \{ \| \hat{x}^k - x^k \| \}_K $ would
be unbounded, then the left-hand side would grow more rapidly than the right-hand side,
and if $ \{ \| \hat{x}^k - x^k \| \}_K $ would be bounded, but staying away, at least
on a subsequence, from zero by a positive number, the right-hand side would be bounded, 
whereas the left-hand side would be unbounded on the corresponding subsequence.

Now, by the mean-value theorem, there exists $ \xi^k $ on the line segment connecting
$ x^k $ with $ \hat{x}^k $ such that
\begin{equation}\label{Prop:gammaxdiff-3}
	\begin{aligned}
   		\psi ( \hat{x}^k ) - \psi (x^k) 
   		&= 
   		f ( \hat{x}^k ) + \phi (\hat{x}^k) -   f (x^k) - \phi (x^k) \\
   		&= 
   		\langle  \nabla f (\xi^k),\hat{x}^k - x^k \rangle + \phi (\hat{x}^k) - \phi (x^k) .
   \end{aligned}
\end{equation}
Substituting the expression $ \phi (\hat{x}^k) - \phi (x^k) $ from \eqref{Prop:gammaxdiff-3}
into \eqref{Prop:gammaxdiff-2} yields 
\begin{equation*}
   \langle \nabla f(x^k) - \nabla f ( \xi^k) ,\hat x^k - x^k \rangle +
   \frac{\hat{\gamma}_k}{2} \| \hat{x}^k - x^k \|^2 + \psi ( \hat{x}^k ) - \psi (x^k) \leq 0.
\end{equation*}
Exploiting \eqref{Prop:gammaxdiff-1}, we therefore obtain 
\begin{align*}
   \frac{\hat{\gamma}_k}{2} \| \hat{x}^k - x^k \|^2
   & \leq 
   - \langle \nabla f(x^k) - \nabla f ( \xi^k), \hat x^k - x^k \rangle +  \psi (x^k) - \psi ( \hat{x}^k ) \\
   & \leq
   \| \nabla f (x^k) - \nabla f ( \xi^k) \| \| \hat x^k - x^k \| + \delta \frac{\hat{\gamma}_k}{2} \| \hat{x}^k - x^k \|^2 ,
\end{align*}
which can be rewritten as
\begin{equation}\label{Prop:gammaxdiff-4}
   ( 1 - \delta ) \frac{\hat{\gamma}_k}{2} \| \hat{x}^k - x^k \| \leq 
   \| \nabla f (x^k) - \nabla f ( \xi^k) \| 
\end{equation}
(note that $ \hat{x}^k \neq x^k $ in view of \eqref{Prop:gammaxdiff-1}).
Since $ x^k\to_K x^* $ (by assumption) and $ \hat{x}^k \to_K x^* $ (by the
previous part of this proof), we also get $ \xi^k \to_K x^* $. Using $ \delta \in (0,1) $
and the continuous differentiability of $ f $, it follows from \eqref{Prop:gammaxdiff-4}
that $ \hat{\gamma}_k \| \hat{x}^k - x^k \| \to_K 0 $.

Finally, exploiting the fact that $ x^{k+1} $ and $ \hat{x}^k $ are solutions of
the subproblems \eqref{Eq:Subki} with parameters $ \gamma_k $ and $ \hat{\gamma}_k $,
respectively, we find
\begin{align*}
   \langle \nabla f(x^k),x^{k+1} - x^k\rangle 
   &+
   \frac{\gamma_k}{2} \| x^{k+1} - x^k \|^2 + \phi (x^{k+1})
   \\
   &
   \leq \langle \nabla f (x^k),  \hat{x}^k - x^k \rangle +
   \frac{\gamma_k}{2} \| \hat{x}^k - x^k \|^2 + \phi (\hat{x}^k),
   \\
    \langle \nabla f(x^k), \hat{x}^k - x^k \rangle 
    &+
   \frac{\hat{\gamma}_k}{2} \| \hat{x}^k - x^k \|^2 + \phi (\hat{x}^k)
   \\
   &
   \leq  \langle \nabla f(x^k),  x^{k+1} - x^k \rangle +
   \frac{\hat{\gamma}_k}{2} \| x^{k+1} - x^k \|^2 + \phi (x^{k+1}).
\end{align*}
Adding these two inequalities and noting that $ \gamma_k = \tau \hat{\gamma}_k >
\hat{\gamma}_k $ yields $ \| x^{k+1} - x^k \| \leq \| \hat{x}^k - x^k \| $ and,
therefore,
\begin{equation*}
   \gamma_k \| x^{k+1}- x^k \| = \tau \hat{\gamma}_k \| x^{k+1} - x^k \| \leq
   \tau \hat{\gamma}_k \| \hat{x}^k - x^k \| \to_K 0.
\end{equation*}
This completes the proof.
\end{proof}

The above technique of proof implies a boundedness result for the sequence $ \{ \gamma_k \} _K$ 
if $ \nabla f$ satisfies a local Lipschitz property around the associated accumulation point of iterates. 
This observation is stated explicitly in the  following result.

\begin{corollary}\label{Cor:LipschitzCase}
Let \Cref{Ass:ProxGradMonotone} hold, let $\{x^k\}$ be a sequence generated by \Cref{Alg:MonotoneProxGrad}, let $ \{ x^k \}_K $ be a subsequence converging
to some point $ x^* $, and assume that $ \nabla f \colon \mathbb X\to\mathbb X$ is locally Lipschitz continuous
around $ x^* $. Then the corresponding subsequence $ \{ \gamma_k \}_K $ is
bounded.
\end{corollary}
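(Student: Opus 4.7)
The plan is to argue by contradiction and reuse the machinery already developed in the proof of \Cref{Prop:gammaxdiff}. Assume that $\{\gamma_k\}_K$ is unbounded. Passing to a subsequence, I may suppose $\gamma_k\to_K\infty$. Since $\gamma_k^0\in[\gamma_{\min},\gamma_{\max}]$ is bounded, this forces the inner loop in \cref{step:subproblem_solve_MonotoneProxGrad} to require at least one backtracking step for every sufficiently large $k\in K$. Hence the vector $\hat x^k:=x^{k,i_k-1}$ associated with $\hat\gamma_k:=\gamma_k/\tau$ is well defined, violates the acceptance criterion \eqref{Eq:StepCrit}, and satisfies $\hat\gamma_k\to_K\infty$.

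Next, I would replay the relevant portion of the proof of \Cref{Prop:gammaxdiff} verbatim: the optimality of $\hat x^k$ in \eqref{Eq:Subki} yields
\[
	\langle \nabla f(x^k),\hat x^k-x^k\rangle +\frac{\hat\gamma_k}{2}\|\hat x^k-x^k\|^2+\phi(\hat x^k)-\phi(x^k)\leq 0,
\]
from which the argument based on \Cref{Ass:ProxGradMonotone}~\ref{item:phi_bounded_affine} and the monotonicity of $\{\psi(x^k)\}$ gives $\|\hat x^k-x^k\|\to_K 0$, so in particular $\hat x^k\to_K x^*$. Combining this with the mean-value theorem and the violated acceptance condition \eqref{Prop:gammaxdiff-1} then leads, exactly as in the proof of \Cref{Prop:gammaxdiff}, to the key estimate
\begin{equation*}
	(1-\delta)\frac{\hat\gamma_k}{2}\|\hat x^k-x^k\|\leq\|\nabla f(x^k)-\nabla f(\xi^k)\|,
\end{equation*}
where $\xi^k$ lies on the segment between $x^k$ and $\hat x^k$, and where $\hat x^k\neq x^k$.

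Now I bring in the new hypothesis. Since $x^k\to_K x^*$ and $\hat x^k\to_K x^*$, for all sufficiently large $k\in K$ both points $x^k$ and $\hat x^k$, and hence the entire segment containing $\xi^k$, lie in a neighborhood of $x^*$ on which $\nabla f$ is Lipschitz continuous with some constant $L>0$. Therefore $\|\nabla f(x^k)-\nabla f(\xi^k)\|\leq L\|x^k-\xi^k\|\leq L\|\hat x^k-x^k\|$, and cancelling the positive factor $\|\hat x^k-x^k\|$ in the displayed estimate yields
\[
	\hat\gamma_k\leq\frac{2L}{1-\delta}
\]
for all sufficiently large $k\in K$. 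This contradicts $\hat\gamma_k\to_K\infty$ and proves the corollary.

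There is no real obstacle here; the work was already done in \Cref{Prop:gammaxdiff}, and the only new ingredient is the observation that the local Lipschitz bound absorbs the right-hand side of \eqref{Prop:gammaxdiff-4} into a constant multiple of $\|\hat x^k-x^k\|$, which is precisely what turns the previous qualitative conclusion $\hat\gamma_k\|\hat x^k-x^k\|\to_K 0$ into an actual bound on $\hat\gamma_k$.
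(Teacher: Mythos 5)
Your proof is correct and follows essentially the same route as the paper: both argue by contradiction, reuse the auxiliary point $\hat x^k=x^{k,i_k-1}$ and the estimate \eqref{Prop:gammaxdiff-4} from the proof of \Cref{Prop:gammaxdiff}, and then use the local Lipschitz bound together with $\xi^k$ lying on the segment between $x^k$ and $\hat x^k$ to cancel $\|\hat x^k-x^k\|$ and bound $\hat\gamma_k$. The only cosmetic difference is that you spell out why the inner loop must backtrack at least once for large $k\in K$, which the paper absorbs into a ``without loss of generality'' in \Cref{Prop:gammaxdiff}.
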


\begin{proof}
We may argue as in the proof of \cref{Prop:gammaxdiff}. 
Hence, 
	on the contrary, assume
that $ \gamma_k \to_K \infty $. For each $k\in K$, define $ \hat{\gamma}_k $ and $ \hat{x}^k $ as in
that proof, and let $ L > 0 $ denote the local Lipschitz constant of $ \nabla f $ around
$ x^* $. Recall that $ x^k \to_K x^* $ (by assumption) and $ \hat{x}^k \to_K x^* $
(from the proof of \cref{Prop:gammaxdiff}). Exploiting \eqref{Prop:gammaxdiff-4},
we therefore obtain
\begin{equation*}
   ( 1 - \delta ) \frac{\hat{\gamma}_k}{2} \| \hat{x}^k - x^k \| \leq L \| \hat{x}^k - \xi^k \|
   \leq L \| \hat{x}^k - x^k \|
\end{equation*}
for all $ k \in K $ sufficiently large, using the fact that $ \xi^k $ is on the 
line segment between $ x^k $ and $ \hat{x}^k $. Since $ \hat{\gamma}_k \to_K \infty $ and 
$ \hat{x}^k \neq x^k $, see once again \eqref{Prop:gammaxdiff-1}, this gives a contradiction. 
Hence, $ \{ \gamma_k \}_K $ stays bounded.
\end{proof}

The following is the main convergence result for \Cref{Alg:MonotoneProxGrad} which 
requires a slightly stronger smoothness assumption on either $ f $ or $ \phi $.

\begin{theorem}\label{Thm:ConvProxGrad}
Assume that \Cref{Ass:ProxGradMonotone} holds while either $ \phi $ is continuous
on $\dom\phi$ or $ \nabla f\colon\mathbb X\to\mathbb X $ is locally Lipschitz continuous.
Then each accumulation point
$ x^* $ of a sequence $ \{ x^k \} $ generated by \Cref{Alg:MonotoneProxGrad}
is an M-stationary point of \eqref{Eq:P}.
\end{theorem}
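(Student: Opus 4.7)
The plan is to pass to the limit along the subsequence $\{x^k\}_K \to x^*$ in the first-order optimality condition for the subproblem defining $x^{k+1}$. Since $x^{k+1}$ solves \eqref{Eq:Subki} with parameter $\gamma_k$, Fermat's rule together with the regular sum rule \eqref{eq:regular_sum_rule} yields
\[
\eta^{k+1} := -\nabla f(x^k) - \gamma_k(x^{k+1} - x^k) \in \widehat\partial\phi(x^{k+1}) \subset \partial\phi(x^{k+1})
\]
for every $k$. The goal is to verify the three hypotheses of the robustness property \eqref{Eq:robustness} of $\partial\phi$ at $x^*$ along the pairs $\{(x^{k+1}, \eta^{k+1})\}_{k\in K}$, which immediately gives $-\nabla f(x^*) \in \partial\phi(x^*)$, i.e., M-stationarity.

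Two of these three ingredients are essentially already available. First, $x^{k+1} \to_K x^*$ follows from \Cref{Prop:xdiff}, since $\|x^{k+1}-x^k\| \to 0$. Second, $\eta^{k+1} \to_K -\nabla f(x^*)$: continuity of $\nabla f$ handles the $\nabla f(x^k)$ term, while the residual $\gamma_k(x^{k+1}-x^k)$ vanishes by \Cref{Prop:gammaxdiff} in general, and, in the locally Lipschitz case, by combining the boundedness of $\{\gamma_k\}_K$ from \Cref{Cor:LipschitzCase} with \Cref{Prop:xdiff}. The remaining and crucial ingredient is the $\phi$-attentive convergence $x^{k+1} \to_\phi x^*$, that is, $\phi(x^{k+1}) \to_K \phi(x^*)$, and this is where the main obstacle lies, since lower semicontinuity only provides the inequality in one direction.

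To treat this last step, I would first observe that $x^* \in \dom\phi$: the acceptance criterion \eqref{Eq:StepCrit} ensures that $\{\psi(x^k)\}$ is monotonically decreasing and, by \Cref{Ass:ProxGradMonotone}~\ref{item:psi_bounded}, bounded below, so it converges to some $\psi^* \in \R$; continuity of $f$ together with $x^{k+1} \to_K x^*$ then gives $\phi(x^{k+1}) = \psi(x^{k+1}) - f(x^{k+1}) \to_K \psi^* - f(x^*) < \infty$, and lower semicontinuity pins $\phi(x^*) \leq \psi^* - f(x^*) < \infty$. The case distinction supplies the missing matching upper bound on $\limsup_K \phi(x^{k+1})$. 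If $\phi$ is continuous on $\dom\phi$, then since $x^{k+1} \in \dom\phi$, $x^* \in \dom\phi$, and $x^{k+1} \to_K x^*$, continuity directly yields $\phi(x^{k+1}) \to_K \phi(x^*)$. Otherwise, when $\nabla f$ is locally Lipschitz, I would test the minimality of $x^{k+1}$ in \eqref{Eq:Subki} against the feasible point $x^*$ to obtain
\[
\phi(x^{k+1}) \leq \langle \nabla f(x^k),\, x^* - x^{k+1}\rangle + \frac{\gamma_k}{2}\bigl(\|x^* - x^k\|^2 - \|x^{k+1} - x^k\|^2\bigr) + \phi(x^*),
\]
and then let $k \to_K \infty$: boundedness of $\{\gamma_k\}_K$ from \Cref{Cor:LipschitzCase} together with $x^k, x^{k+1} \to_K x^*$ makes the right-hand side converge to $\phi(x^*)$, so $\limsup_K \phi(x^{k+1}) \leq \phi(x^*)$, and combining with lower semicontinuity closes the gap. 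With $\eta^{k+1} \in \partial\phi(x^{k+1})$, $x^{k+1} \to_\phi x^*$, and $\eta^{k+1} \to_K -\nabla f(x^*)$ now all in place, \eqref{Eq:robustness} finishes the argument.
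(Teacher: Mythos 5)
Your proposal is correct and follows essentially the same route as the paper: pass to the limit in the subproblem optimality condition, reduce everything to the $\phi$-attentive convergence $\phi(x^{k+1})\to_K\phi(x^*)$, handle the continuous case directly, and in the locally Lipschitz case obtain $\limsup_K\phi(x^{k+1})\leq\phi(x^*)$ by testing the minimality of $x^{k+1}$ against $x^*$ together with the boundedness of $\{\gamma_k\}_K$ from \Cref{Cor:LipschitzCase}. The only cosmetic differences are your slightly different (but valid) argument that $x^*\in\dom\phi$ and your explicit appeal to the robustness property \eqref{Eq:robustness}, which the paper performs implicitly when taking the limit in \eqref{Eq:OptCondSub}.
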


\begin{proof}
Let $ \{ x^k \}_K $ be a subsequence converging to $ x^* $. In view of
\Cref{Prop:xdiff}, it follows that also the subsequence $ \{ x^{k+1} \}_K $
converges to $ x^* $. Furthermore, \Cref{Prop:gammaxdiff} yields
$ \gamma_k \| x^{k+1} - x^k \| \to_K 0 $.
The minimizing property of $ x^{k+1} $, Fermat's rule, and the sum rule \eqref{eq:regular_sum_rule} imply that
\begin{equation}\label{Eq:OptCondSub}
   0 \in  \nabla f (x^{k}) + \gamma_k (x^{k+1}-x^k) 
   + \widehat\partial \phi (x^{k+1})
\end{equation}
holds for each $k\in K$. Hence, if we can show $ \phi (x^{k+1}) \to_K \phi (x^*) $, we can
take the limit $ k \to_K \infty $ in \eqref{Eq:OptCondSub} 
to obtain the desired statement $ 0 \in  \nabla f (x^*) + \partial \phi (x^*) $.

	Due to \eqref{Eq:xdiff-1}, we find $\psi(x^{k+1})\leq \psi(x^0)$
	for each $k\in K$. Taking the limit $k\to_K\infty$ while respecting
	the lower semicontinuity of $\phi$ gives $\psi(x^*)\leq\psi(x^0)$,
	and due to $x^0\in\dom\phi$, we find $x^*\in\dom\phi$.
Thus, the condition $ \phi (x^{k+1}) \to_K \phi (x^*) $ obviously holds if
$ \phi $ is continuous on its domain since all iterates $ x^k $ generated by
\Cref{Alg:MonotoneProxGrad} 
	as well as $x^*$
belong to $\dom\phi$.

Hence, it remains to consider the situation where $ \phi $ is only lower semicontinuous, but
$ \nabla f $ is locally Lipschitz continuous. 
From $ x^{k+1} \to_K x^* $ and the lower semicontinuity of $\phi$, we find
\begin{equation*}
   \phi (x^*) \leq \liminf_{k \in K} \phi (x^{k+1}) \leq \limsup_{k \in K} \phi (x^{k+1}).
\end{equation*}
It therefore suffices to show that $ \limsup_{k \in K} \phi (x^{k+1}) \leq \phi (x^*) $ holds.
Since $ x^{k+1} $ solves the subproblem \eqref{Eq:Subki} with parameter $ \gamma_k $, we obtain
\begin{align*}
    \langle \nabla f (x^k),x^{k+1}-x^k \rangle 
    &+ 
    \frac{\gamma_k}{2} \| x^{k+1} - x^k \|^2 +  \phi (x^{k+1} ) 
    \\
    &
    \leq  
    \langle \nabla f(x^k),x^* - x^k \rangle 
   	+ 
   	\frac{\gamma_k}{2} \| x^* - x^k \|^2 + \phi (x^*)
\end{align*}
for each $k\in K$.
We now take the upper limit over $K$ on both sides. 
Using the continuity of $ \nabla f $, 
the convergences $ x^{k+1} - x^k \to_K 0 $ 
as well as $ \gamma_k \| x^{k+1} - x^k \|^2 \to_K 0 $ (see \Cref{Prop:xdiff,Prop:gammaxdiff}),
and taking into account that $ \gamma_k \| x^k - x^* \|^2 \to_K 0 $ due to the boundedness
of the subsequence $ \{ \gamma_k \}_K $ in this situation, see 
\cref{Cor:LipschitzCase}, we obtain 
$ \limsup_{k \in K} \phi (x^{k+1}) \leq \phi (x^*) $. Altogether, we
therefore get $ \phi (x^{k+1}) \to_K \phi (x^*) $, and this completes the proof.
\end{proof}

Note that $ \phi $ being continuous on $ \dom \phi$ is an assumption which holds, e.g., if
$ \phi $ is the indicator function of a closed set, see \Cref{Rem:constrained_opt}. 
Therefore, \Cref{Thm:ConvProxGrad}
provides a global convergence result for constrained optimization problems with an
arbitrary continuously differentiable objective function over any closed (not necessarily
convex) feasible set.
Moreover, the previous convergence
result also holds for a general lower semicontinuous function $ \phi $ provided that $ \nabla f $ 
is locally Lipschitz continuous. This includes, for example, sparse optimization problems
in $\mathbb X\in\{\R^n,\R^{n\times m}\}$ involving the so-called $ \ell_0$-quasi-norm, which counts the number
of nonzero entries of the input vector, as a penalty term or optimization problems in $\mathbb X:=\R^{n\times m}$
comprising rank penalties.
Note that we still do not require the global Lipschitz 
continuity of $ \nabla f $. However, it is an open question whether the previous
convergence result also holds for the general setting where $ f $ is only continuously
differentiable and $ \phi $ is just lower semicontinuous.

\begin{remark}\label{rem:termination_MonotoneGroxGrad}
	Let $\{x^k\}$ be a sequence generated by \cref{Alg:MonotoneProxGrad}.
	In iteration $k\in\N$, $x^{k+1}$ satisfies the necessary optimality
	condition \eqref{Eq:OptCondSub} of the subproblem \eqref{Eq:Subki}.
	Hence, from the next iteration's point of view, we obtain
	\[
		\gamma_{k-1}(x^{k-1}-x^k)+\nabla f(x^k)-\nabla f(x^{k-1})
		\in 
		\nabla f(x^k)+\widehat{\partial}\phi(x^k)
	\]
	for each $k\in\N$ with $k\geq 1$. 
	This justifies evaluation of the termination criterion
	\begin{equation}\label{eq:termination}
		\norm{\gamma_{k-1}(x^{k-1}-x^k)+\nabla f(x^k)-\nabla f(x^{k-1})}\leq\tau_\textup{abs}
	\end{equation}
	for some $\tau_\textup{abs}>0$ since this means that $x^k$ is, in some sense,
	approximately M-stationary for \eqref{Eq:P}.
	Observe that, along a subsequence $\{x^k\}_K$ satisfying $x^{k-1}\to_K x^*$ for some $x^*$,
	\cref{Prop:xdiff,Prop:gammaxdiff} yield $x^k\to_K x^*$ and
	$\gamma_{k-1}(x^k-x^{k-1})\to_K0$ under appropriate assumptions, which means that
	\eqref{eq:termination} is satisfied for large enough $k\in K$ due to continuity
	of $\nabla f\colon\mathbb X\to\mathbb X$,
	see the discussion after \cref{Lem:StepsizeFinite} as well.
\end{remark}

Recall that the existence of accumulation points is guaranteed by the coercivity of
the function $ \psi $. A simple criterion for the convergence of the entire sequence
$ \{ x^k \} $ is provided by the following comment.

\begin{remark}\label{rem:conv_entire_sequence}
Let $ \{ x^k \} $ be any sequence generated by \Cref{Alg:MonotoneProxGrad}
such that $ x^* $ is an isolated accumulation point of this sequence. Then the
entire sequence converges to $ x^* $. This follows immediately from 
\cite[Lemma~4.10]{MoS1983} and the property of the proximal gradient method
stated in \Cref{Prop:xdiff}. The accumulation point $ x^* $ is isolated, in
particular, if $ f $ is twice continuously differentiable with 
$ \nabla^2 f(x^*) $ being positive definite and $ \phi $ is convex. In this
situation, $ x^* $ is a strict local minimum of $ \psi $ and therefore the
only stationary point of $ \psi $ is a neighborhood of $ x^* $. Since,
by \Cref{Thm:ConvProxGrad}, every accumulation point is stationary, it
follows that $ x^* $ is necessarily an isolated stationary point in this
situation and, thus, convergence of the whole sequence $ \{ x^k \} $
to $ x^* $ follows.
\end{remark}

\section{Nonmonotone Proximal Gradient Method}\label{Sec:GenSpecGradNM}

The method to be presented here is a nonmonotone version of the proximal gradient method
from the previous section. The kind of nonmonotonicity used here was introduced by
Grippo et al.\ \cite{GrippoLamparielloLucidi1986} for a class of smooth unconstrained
optimization problems and then discussed, in the framework of composite optimization problems,
by Wright et al.\ \cite{WrightNowakFigueiredo2009} as well as in some subsequent papers.
We first state the precise algorithm and investigate its convergence properties. The relation
to the existing convergence results is postponed until the end of this section.

\begin{algorithm}[Nonmonotone Proximal Gradient Method]\leavevmode
	\label{Alg:NonmonotoneProxGrad}
	\begin{algorithmic}[1]
		\REQUIRE $ \tau > 0$, $0 < \gamma_{\min} \leq \gamma_{\max} < \infty$, $m \in \mathbb{N}$, $\delta\in (0,1)$,   $x^0 \in \dom\phi $
		\STATE Set $k := 0$.
		\WHILE{A suitable termination criterion is violated at iteration $ k $}
		\STATE Set $ m_k := \min \{ k, m \} $ and choose $ \gamma_k^0 \in [ \gamma_{\min}, \gamma_{\max}] $.
		\STATE
			For $ i = 0, 1, 2, \ldots $, compute a solution $ x^{k,i} $ of
	       	\begin{equation}\label{Eq:NonSubki}
         		\min_x \ f (x^k) + \langle \nabla f (x^k),x - x^k\rangle
         		+ \frac{\gamma_{k,i}}{2} \| x - x^k \|^2 + \phi (x),
         		\quad x\in\mathbb X
      		\end{equation}
      		with $ \gamma_{k,i} := \tau^i \gamma_k^0 $, until the acceptance criterion
      		\begin{equation}\label{Eq:NonStepCrit}
         		\psi (x^{k,i}) \leq \max_{j=0, 1, \ldots, m_k}
         		\psi (x^{k-j}) - \delta \frac{\gamma_{k,i}}{2} \| x^{k,i} - x^k \|^2 
      		\end{equation}
      		holds.
		\STATE Denote by $ i_k := i $ the terminal value, and set $ \gamma_k := 
      			\gamma_{k,i_k} $ and $ x^{k+1} := x^{k,i_k} $.
      	\STATE Set $ k \leftarrow k + 1 $.
		\ENDWHILE
		\RETURN $x^k$
	\end{algorithmic}
\end{algorithm}

The only difference between \Cref{Alg:MonotoneProxGrad} and \Cref{Alg:NonmonotoneProxGrad}
is in the stepsize rule. More precisely, \Cref{Alg:NonmonotoneProxGrad} may be viewed
as a generalization of \Cref{Alg:MonotoneProxGrad} since the particular choice $ m = 0 $
recovers \Cref{Alg:MonotoneProxGrad}. Numerically, in many examples, the choice $ m > 0 $
leads to better results and is therefore preferred in practice. On the other hand, for 
$ m > 0 $, we usually get a nonmonotone behavior of the function values $ \{\psi (x^k) \} $
which complicates the theory significantly. 
In addition, the nonmontone proximal
gradient method also requires stronger assumptions in order to prove a suitable
convergence result.

In particular, in addition to the requirements from \cref{Ass:ProxGradMonotone}, we need 
the following additional conditions on the data functions in order to proceed.

\begin{assumption}\label{Ass:ProxGradNonmonotone}
	\begin{enumerate}
		\item\label{item:uniform_continuity}
			The function $ \psi $ is uniformly continuous on the sublevel set 
			$\mathcal L_\psi(x^0):=\{x\in \mathbb X\,|\,\psi(x)\leq\psi(x^0)\}$.
		\item\label{item:continuity_phi}
			The function $\phi$ is continuous on $\dom\phi$.
	\end{enumerate}
\end{assumption}

Note that we always have $\mathcal L_\psi(x^0)\subset\dom\phi$ by the continuity of $f$.
Furthermore, whenever $\psi$ is coercive, \cref{Ass:ProxGradNonmonotone}~\ref{item:continuity_phi}
already implies \cref{Ass:ProxGradNonmonotone}~\ref{item:uniform_continuity} since
$\mathcal L_\psi(x^0)$ would be a compact subset of $\dom\phi$ in this situation, and continuous
functions are uniformly continuous on compact sets. Observe that coercivity of $\psi$ is an
inherent property in many practically relevant settings. We further note that, in general,
\cref{Ass:ProxGradNonmonotone}~\ref{item:uniform_continuity} does not imply
\cref{Ass:ProxGradNonmonotone}~\ref{item:continuity_phi}, and the latter is a necessary
requirement since, in our convergence
theory, we will also evaluate the function $ \phi $ in some points resulting from an auxiliary
sequence $ \{ \hat{x}^k \} $ which may not belong to the level set $\mathcal L_\psi(x^0) $.

For the convergence theory, we assume implicitly that 
\cref{Alg:NonmonotoneProxGrad} generates an infinite sequence  $\{x^k\}$. 
We first
note that the stepsize rule in the inner loop of \cref{Alg:NonmonotoneProxGrad} is always finite.
Since 
\[
	\psi(x^k)\leq\max\limits_{j=0,1,\ldots,m_k}\psi(x^{k-j})
\]
this observation follows immediately from \Cref{Lem:StepsizeFinite}.

Throughout the section, for each $k\in\N$, let $l(k)\in\{k-m_k,\ldots,k\}$ be an index such that
\[
	\psi(x^{l(k)})=\max\limits_{j=0,1,\ldots,m_k}\psi(x^{k-j})
\]
is valid. We already mentioned that $\{\psi(x^k)\}$ may possess a nonmonotone behavior.
However, as the following lemma shows, $\{\psi(x^{l(k)})\}$ is monotonically decreasing.

\begin{lemma}\label{Lem:decrease_condition_in_nonmonotone_framework}
	Let \cref{Ass:ProxGradMonotone}~\ref{item:phi_bounded_affine} hold
	and let $\{x^k\}$ be a sequence generated by \cref{Alg:NonmonotoneProxGrad}.
	Then $\{\psi(x^{l(k)})\}$ is monotonically decreasing.
\end{lemma}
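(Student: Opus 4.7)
The plan is to follow the classical index-tracking argument for Grippo--Lampariello--Lucidi-type nonmonotone schemes. The two ingredients I need are (i) a per-iteration decrease against the reference value $\psi(x^{l(k)})$ and (ii) a compatibility between the sliding windows defining $l(k)$ and $l(k+1)$.

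First, I would invoke the acceptance criterion \eqref{Eq:NonStepCrit} at iteration $k$, which (since the inner loop is finite by the observation right after \cref{Ass:ProxGradNonmonotone}, which reduces to \cref{Lem:StepsizeFinite}) gives
\[
   \psi(x^{k+1}) \leq \psi(x^{l(k)}) - \delta\,\frac{\gamma_k}{2}\,\|x^{k+1}-x^k\|^2 \leq \psi(x^{l(k)}).
\]
This takes care of the newest iterate in the window for $l(k+1)$.

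Second, I would compare the two windows. Since $m_{k+1} = \min\{k+1,m\} \leq m_k + 1$, the index set $\{k+1-j\,|\,j=1,\ldots,m_{k+1}\} = \{k, k-1, \ldots, k+1-m_{k+1}\}$ is contained in $\{k-m_k,\ldots,k\}$, hence
\[
   \max_{j=1,\ldots,m_{k+1}} \psi(x^{k+1-j}) \leq \max_{j=0,\ldots,m_k}\psi(x^{k-j}) = \psi(x^{l(k)}).
\]
Combining this with the bound from the previous step yields
\[
   \psi(x^{l(k+1)}) = \max\Bigl\{\psi(x^{k+1}),\,\max_{j=1,\ldots,m_{k+1}}\psi(x^{k+1-j})\Bigr\} \leq \psi(x^{l(k)}),
\]
which is the claim. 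I do not expect a genuine obstacle here; the only delicate point is the index bookkeeping for the window shift, which is handled by the inequality $m_{k+1} \leq m_k + 1$ following directly from $m_k = \min\{k,m\}$.
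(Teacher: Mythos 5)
Your proposal is correct and follows essentially the same route as the paper: both rest on the acceptance criterion \eqref{Eq:NonStepCrit} rewritten against $\psi(x^{l(k)})$ and on the window-shift inequality $m_{k+1}\leq m_k+1$, differing only in whether one first enlarges the window to $m_k+1$ and then splits off $\psi(x^{k+1})$ (the paper) or splits off the $j=0$ term and compares index sets directly (your version).
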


\begin{proof}
	The nonmonotone stepsize rule from \eqref{Eq:NonStepCrit} can be rewritten as
	\begin{equation}\label{Eq:NA1}
  		\psi(x^{k+1}) \leq \psi(x^{l(k)}) - \delta
  		\frac{\gamma_k}{2} \| x^{k+1} - x^k \|^2.
	\end{equation}
	Using $ m_{k+1} \leq m_k + 1 $, we find
	\begin{align*}
   		\psi ( x^{l(k+1)}) & 
   		= \max_{j=0,1, \ldots, m_{k+1}}\psi (x^{k+1-j}) \\
   		& \leq \max_{j=0,1, \ldots, m_k +1}\psi (x^{k+1-j}) \\
   		& = \max \left\{ \max_{j = 0, 1, \ldots, m_k}\psi (x^{k-j}),
   		\psi (x^{k+1}) \right\} \\
   		& = \max \left\{ \psi (x^{l(k)}),\psi (x^{k+1}) \right\} \\
   		& = \psi (x^{l(k)}),
	\end{align*}
	where the last equality follows from \eqref{Eq:NA1}.
	This shows the claim.
\end{proof}

As a corollary of the above result, we obtain that the iterates of \cref{Alg:NonmonotoneProxGrad}
belong to the level set $\mathcal L_\psi(x^0)$.

\begin{corollary}\label{Cor:level_set_condition_nonmonotone_framework}
	Let \cref{Ass:ProxGradMonotone}~\ref{item:phi_bounded_affine} hold
	and let $\{x^k\}$ be a sequence generated by \cref{Alg:NonmonotoneProxGrad}.
	Then $\{x^k\},\{x^{l(k)}\}\subset\mathcal L_\psi(x^0)$ holds.
\end{corollary}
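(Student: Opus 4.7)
The proof should be short and rely almost entirely on \cref{Lem:decrease_condition_in_nonmonotone_framework} together with the acceptance criterion \eqref{Eq:NonStepCrit}. There is no real obstacle; the only thing to check is that the base case lines up so that the monotone decrease of $\{\psi(x^{l(k)})\}$ is anchored at $\psi(x^0)$.

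First I would note that $x^0\in\mathcal L_\psi(x^0)$ holds by definition, and that $m_0=\min\{0,m\}=0$ forces $l(0)=0$, so $\psi(x^{l(0)})=\psi(x^0)$. Applying \cref{Lem:decrease_condition_in_nonmonotone_framework} then immediately gives
\[
	\psi(x^{l(k)})\leq\psi(x^{l(0)})=\psi(x^0)
\]
for every $k\in\N$, which shows $x^{l(k)}\in\mathcal L_\psi(x^0)$.

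For the claim on $\{x^k\}$ itself, I would simply combine the nonmonotone stepsize rule \eqref{Eq:NonStepCrit} (in the form \eqref{Eq:NA1}) with the above. Namely, for each $k\in\N$,
\[
	\psi(x^{k+1})
	\leq
	\psi(x^{l(k)})-\delta\frac{\gamma_k}{2}\norm{x^{k+1}-x^k}^2
	\leq
	\psi(x^{l(k)})
	\leq
	\psi(x^0),
\]
so $x^{k+1}\in\mathcal L_\psi(x^0)$, and together with $x^0\in\mathcal L_\psi(x^0)$ this yields the inclusion $\{x^k\}\subset\mathcal L_\psi(x^0)$. Since \cref{Ass:ProxGradMonotone}~\ref{item:phi_bounded_affine} is the only assumption used in \cref{Lem:decrease_condition_in_nonmonotone_framework}, no further hypothesis is needed, and the corollary follows.
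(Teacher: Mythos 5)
Your argument is correct and is essentially identical to the paper's own proof: both note $l(0)=0$, invoke \cref{Lem:decrease_condition_in_nonmonotone_framework} to get $\psi(x^{l(k)})\leq\psi(x^{l(0)})=\psi(x^0)$, and then chain \eqref{Eq:NA1} to conclude $\psi(x^{k+1})\leq\psi(x^{l(k)})\leq\psi(x^0)$. No issues.
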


\begin{proof}
	Noting that $l(0)=0$ holds by construction, \cref{Lem:decrease_condition_in_nonmonotone_framework} 
	and \eqref{Eq:NA1} yield the estimate $\psi(x^{k+1})\leq\psi(x^{l(k)})\leq\psi(x^{l(0)})=\psi(x^0)$ for each $k\in\N$
	which shows the claim.
\end{proof}

The counterpart of \Cref{Prop:xdiff} is significantly more difficult to prove in the nonmonotone setting.
In fact, it is this central result which requires the uniform continuity of
the objective function $ \psi $ from \cref{Ass:ProxGradNonmonotone}~\ref{item:uniform_continuity}. Though its proof is essentially the one
from \cite{WrightNowakFigueiredo2009}, we present all details since they 
turn out to be of some importance for the discussion at the end of this section.

\begin{proposition}\label{Prop:xdiff_NM}
	Let \Cref{Ass:ProxGradMonotone} and \cref{Ass:ProxGradNonmonotone}~\ref{item:uniform_continuity} hold.
	Then each sequence $\{x^k\}$ generated by \cref{Alg:NonmonotoneProxGrad} satisfies $ \| x^{k+1} - x^k \| \to 0 $.
\end{proposition}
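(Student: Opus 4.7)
The plan is to adapt the classical Grippo--Lampariello--Lucidi argument, using the uniform continuity of $\psi$ on $\mathcal L_\psi(x^0)$ to transport information from the "max-index" subsequence $\{x^{l(k)}\}$ to the full sequence $\{x^k\}$. All iterates lie in $\mathcal L_\psi(x^0)$ by \cref{Cor:level_set_condition_nonmonotone_framework}, so uniform continuity will be applicable throughout.

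First, since $\{\psi(x^{l(k)})\}$ is monotonically decreasing by \cref{Lem:decrease_condition_in_nonmonotone_framework} and bounded below by \cref{Ass:ProxGradMonotone}~\ref{item:psi_bounded}, it converges to some $\psi^*\in\R$. Writing the acceptance criterion \eqref{Eq:NA1} at iteration $l(k)-1$ (for $k$ large enough that $l(k)\geq 1$) yields
\[
   \psi(x^{l(k)})\leq \psi(x^{l(l(k)-1)}) - \delta\frac{\gamma_{l(k)-1}}{2}\|x^{l(k)}-x^{l(k)-1}\|^2.
\]
Because $\{l(k)-1\}$ indexes a subsequence of the already-convergent sequence $\{\psi(x^{l(j)})\}$, the right-hand side's first term also tends to $\psi^*$, so $\gamma_{l(k)-1}\|x^{l(k)}-x^{l(k)-1}\|^2\to 0$, and since $\gamma_k\geq\gamma_{\min}$ we obtain $\|x^{l(k)}-x^{l(k)-1}\|\to 0$. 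Invoking uniform continuity on $\mathcal L_\psi(x^0)$ then gives $\psi(x^{l(k)-1})\to \psi^*$.

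Next, I would iterate this: by induction on $j=1,2,\dots,m+1$, I would show $\|x^{l(k)-j+1}-x^{l(k)-j}\|\to 0$ and $\psi(x^{l(k)-j})\to \psi^*$. The inductive step applies \eqref{Eq:NA1} at iteration $l(k)-j$ to bound $\psi(x^{l(k)-j+1})\leq \psi(x^{l(l(k)-j)}) - \delta\tfrac{\gamma_{l(k)-j}}{2}\|x^{l(k)-j+1}-x^{l(k)-j}\|^2$, uses the inductive hypothesis on the left and monotone convergence of $\{\psi(x^{l(\cdot)})\}$ on the right to conclude $\|x^{l(k)-j+1}-x^{l(k)-j}\|\to 0$, and then uniform continuity to promote this to $\psi(x^{l(k)-j})\to \psi^*$. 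The main obstacle is the bookkeeping here: the uniform continuity assumption is what makes the vanishing of norms propagate to the vanishing of the $\psi$-gap, and without it this step collapses.

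Finally, I would show $\psi(x^{k+1})\to \psi^*$ for the full sequence. Since $l(k+m+1)\in\{k+1,\dots,k+m+1\}$, there exists $j_k\in\{0,1,\dots,m\}$ with $k+1=l(k+m+1)-j_k$, and the uniform bound
\[
   |\psi(x^{k+1})-\psi^*|\leq \max_{j=0,\dots,m}|\psi(x^{l(k+m+1)-j})-\psi^*|
\]
together with the induction output forces $\psi(x^{k+1})\to \psi^*$. Plugging this into the acceptance criterion
\[
   \psi(x^{k+1})\leq \psi(x^{l(k)}) - \delta\frac{\gamma_k}{2}\|x^{k+1}-x^k\|^2,
\]
both sides of which converge to $\psi^*$, yields $\gamma_k\|x^{k+1}-x^k\|^2\to 0$; using $\gamma_k\geq \gamma_{\min}>0$ one last time delivers $\|x^{k+1}-x^k\|\to 0$.
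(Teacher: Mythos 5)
Your proof is correct and follows essentially the same route as the paper's: convergence of $\psi(x^{l(k)})$ to a limit $\psi^*$, a backward induction that propagates $\|x^{l(k)-j+1}-x^{l(k)-j}\|\to 0$ and $\psi(x^{l(k)-j})\to\psi^*$ using the uniform continuity of $\psi$ on $\mathcal L_\psi(x^0)$, and a transfer to the full sequence via the bounded offset $k+1=l(k+m+1)-j_k$. The only cosmetic difference is the final step, where the paper argues by contradiction on a subsequence of $\{x^{k-m}-x^{k-m-1}\}$ while you first establish $\psi(x^{k+1})\to\psi^*$ and then reuse the acceptance criterion; both are valid.
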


\begin{proof}
 Since $ \psi $ is bounded from 
below due to \cref{Ass:ProxGradMonotone}~\ref{item:psi_bounded}, \cref{Lem:decrease_condition_in_nonmonotone_framework} implies 
\begin{equation}\label{Eq:Limitfk}
   \lim_{k \to \infty}\psi ( x^{l(k)}) 
   = \psi^*
\end{equation}
for some finite $\psi^*\in\R$.
From \cref{Cor:level_set_condition_nonmonotone_framework}, we find $\{x^{l(k)}\}\subset\mathcal L_\psi(x^0)$. 
Applying \eqref{Eq:NA1} with $k$ replaced by $l(k)-n-1$ for some $n\in\N$ gives $\psi(x^{l(k)-n})\leq\psi(x^{l(l(k)-n-1)})\leq\psi(x^0)$, i.e.,
$\{x^{l(k)-n}\}\subset\mathcal L_\psi(x^0)$ (here, we assume implicitly that $k$ is large enough such that no negative indices $l(k)-n-1$ occur).
More precisely, for $n=0$, we have
\begin{equation*}
   \psi (x^{l(k)}) -\psi (x^{l(l(k)-1)}) 
   \leq - \delta \frac{\gamma_{l(k)-1}}{2}
   \| x^{l(k)} - x^{l(k)-1} \|^2 \leq 0.
\end{equation*}
Taking the limit $ k \to \infty $ in the previous inequality and using \eqref{Eq:Limitfk},
we therefore obtain
\begin{equation*}
   \lim_{k \to \infty} \gamma_{l(k)-1} \| x^{l(k)} - x^{l(k)-1} \|^2 = 0 .
\end{equation*}
Since $ \gamma_k \geq \gamma_{\min} > 0 $ for all $ k \in \mathbb{N} $, we get
\begin{equation}\label{Eq:Ind1}
   \lim_{k \to \infty} d^{l(k)-1} = 0,
\end{equation}
where $ d^k := x^{k+1} - x^k $ for all $ k \in \mathbb{N} $. 
Using \eqref{Eq:Limitfk} and \eqref{Eq:Ind1}, it follows that
\begin{equation}\label{Eq:Ind2}
   \psi^* = \lim_{k \to \infty}\psi (x^{l(k)}) =
   \lim_{k \to \infty} \psi \big( x^{l(k)-1} + d^{l(k)-1} \big) =
   \lim_{k \to \infty} \psi (x^{l(k)-1}),
\end{equation}
where the last equality takes into account the uniform continuity of $ \psi $ 
from \cref{Ass:ProxGradNonmonotone}~\ref{item:uniform_continuity}
and \eqref{Eq:Ind1}.

We will now prove, by induction, that the limits
\begin{equation}\label{Eq:Indj}
   \lim_{k \to \infty} d^{l(k)-j} = 0,\qquad
   \lim_{k \to \infty}\psi (x^{l(k)-j}) = \psi^* 
\end{equation}
hold for all $ j \in\N$ with $j\geq 1$.
We already know from \eqref{Eq:Ind1} and \eqref{Eq:Ind2} that \eqref{Eq:Indj} holds for 
$ j = 1 $. Suppose that \eqref{Eq:Indj} holds for some $ j \geq 1 $. We need to show that
it holds for $ j+1 $. Using \eqref{Eq:NA1} with $ k $ replaced by
$ l(k)-j-1 $, we have
\begin{equation*}
  \psi (x^{l(k)-j}) \leq \psi (x^{l(l(k)-j-1)}) - \delta 
  \frac{\gamma_{l(k)-j-1}}{2} \| d^{l(k)-j-1} \|^2 
\end{equation*}
(again, we assume implicitly that $ k $ is large enough such that $ l(k)-j-1 $ is nonnegative). 
Rearranging this expression and using $ \gamma_k \geq
\gamma_{\min} $ for all $ k $ yields
\begin{equation*}
   \| d^{l(k)-j-1} \|^2 
   \leq \frac{2}{\gamma_{\min} \delta}
   \big(\psi(x^{l(l(k)-j-1)}) -\psi (x^{l(k)-j}) \big).
\end{equation*}
Taking $ k \to \infty $, using \eqref{Eq:Limitfk}, as well as the induction
hypothesis, it follows that 
\begin{equation}\label{Eq:dstar}
   \lim_{k \to \infty} d^{l(k)-j-1} = 0,
\end{equation}
which proves the induction step for the first limit in \eqref{Eq:Indj}.
The second limit then follows from 
\begin{equation*}
   \lim_{k \to \infty} \psi \big( x^{l(k)-(j+1)} \big) =
   \lim_{k \to \infty} \psi \big( x^{l(k)-(j+1)} + d^{l(k)-j-1)} \big) =
   \lim_{k \to \infty} \psi \big( x^{l(k)-j} \big) =
   \psi^*,
\end{equation*}
where the first equation exploits \eqref{Eq:dstar} together with
the uniform continuity of $ \psi $ from \cref{Ass:ProxGradNonmonotone}~\ref{item:uniform_continuity} 
and $\{x^{l(k)-j}\},\{x^{l(k)-(j+1)}\}\subset\mathcal L_\psi(x^0)$, 
whereas the final equation is the induction hypothesis.

In the last step of our proof, we now show that $ \lim_{k \to \infty} d^k = 0 $
holds. Suppose that this is not true. Then there is a (suitably shifted, for notational
simplicity) subsequence $ \{ d^{k-m-1} \}_{k \in K} $
and a constant $ c > 0 $ such that
\begin{equation}\label{Eq:Contrad}
   \| d^{k-m-1} \| \geq c \quad \forall k \in K.
\end{equation}
Now, for each $ k \in K $, the corresponding index $ l(k) $ is one of the indices
$ k - m, k - m + 1, \ldots, k $. Hence, we can write $ k - m - 1 = l(k) - j_k $
for some index $ j_k \in \{ 1, 2, \ldots, m+1 \} $. Since there are only finitely
many possible indices $ j_k $, we may assume without loss of generality that
$ j_k = j $ holds for some fixed index $ j \in \{1,\ldots,m+1\}$. Then \eqref{Eq:Indj} implies
\begin{equation*}
   \lim_{k \to_K \infty} d^{k-m-1} = \lim_{k \to_K \infty} d^{l(k) - j} = 0.
\end{equation*}
This contradicts \eqref{Eq:Contrad} and therefore completes the proof.
\end{proof}

\begin{theorem}\label{Thm:ConvProxGradNM}
Assume that \Cref{Ass:ProxGradMonotone,Ass:ProxGradNonmonotone} hold
and let $\{x^k\}$ be a sequence generated by \cref{Alg:NonmonotoneProxGrad}.
Suppose that $x^*$ is an accumulation point of $\{x^k\}$ such that
$x^k\to_K x^*$ holds along a subsequence $k\to_K\infty$.
Then $ x^* $ is an M-stationary point of \eqref{Eq:P}, and $\gamma_k(x^{k+1}-x^k)\to_K 0$ is valid.
\end{theorem}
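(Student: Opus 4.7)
The plan is to replicate the structure of the proof of \cref{Thm:ConvProxGrad} but using the nonmonotone analogues established in this section. Since $\{x^k\}_K$ converges to $x^*$, \cref{Prop:xdiff_NM} yields $x^{k+1}\to_K x^*$. Fermat's rule and the sum rule \eqref{eq:regular_sum_rule} applied to the subproblem \eqref{Eq:NonSubki} give
\[
   \eta^{k+1} := -\nabla f(x^k)-\gamma_k(x^{k+1}-x^k)\in\widehat{\partial}\phi(x^{k+1})\subset\partial\phi(x^{k+1}),
\]
so the conclusion reduces to two claims: (i) $\gamma_k(x^{k+1}-x^k)\to_K 0$, which also guarantees $\eta^{k+1}\to_K -\nabla f(x^*)$ by the continuity of $\nabla f$, and (ii) $\phi(x^{k+1})\to_K\phi(x^*)$, so that the robustness property \eqref{Eq:robustness} of the limiting subdifferential yields $-\nabla f(x^*)\in\partial\phi(x^*)$, i.e., M-stationarity of $x^*$.

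The main obstacle is claim (i), which requires a nonmonotone version of \cref{Prop:gammaxdiff}. The argument carries over almost verbatim: if $\{\gamma_k\}_K$ stays bounded, the claim follows from \cref{Prop:xdiff_NM}; otherwise, passing to a subsequence we may assume $\gamma_k\to_K\infty$ and that the acceptance criterion \eqref{Eq:NonStepCrit} fails in the first inner iteration. Setting $\hat\gamma_k:=\gamma_k/\tau$ and $\hat x^k:=x^{k,i_k-1}$, the subproblem inequality combined with \cref{Cor:level_set_condition_nonmonotone_framework} (which delivers the uniform bound $\psi(x^k)\leq\psi(x^0)$ needed in place of monotonicity) and \cref{Ass:ProxGradMonotone}~\ref{item:phi_bounded_affine} forces $\|\hat x^k-x^k\|\to_K 0$, hence $\hat x^k\to_K x^*$. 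The violated acceptance criterion yields
\[
    \psi(x^{l(k)})-\psi(\hat x^k)<\delta\,\frac{\hat\gamma_k}{2}\|\hat x^k-x^k\|^2,
\]
and since $\psi(x^k)\leq\psi(x^{l(k)})$, we still obtain $\psi(x^k)-\psi(\hat x^k)<\delta\,\frac{\hat\gamma_k}{2}\|\hat x^k-x^k\|^2$ — in fact, the nonmonotonicity only contributes a nonpositive term, which helps rather than hurts. A mean-value expansion of $f$ as in \cref{Prop:gammaxdiff-3} then reproduces \eqref{Prop:gammaxdiff-4}, and the continuity of $\nabla f$ together with $\xi^k\to_K x^*$ gives $\hat\gamma_k\|\hat x^k-x^k\|\to_K 0$. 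Comparing the optimal values of the two subproblems at $\gamma_k$ and $\hat\gamma_k$ as at the end of the proof of \cref{Prop:gammaxdiff} finally yields $\gamma_k\|x^{k+1}-x^k\|\leq\tau\hat\gamma_k\|\hat x^k-x^k\|\to_K 0$.

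Claim (ii) is now easy in the nonmonotone setting thanks to \cref{Ass:ProxGradNonmonotone}~\ref{item:continuity_phi}: \cref{Cor:level_set_condition_nonmonotone_framework} gives $\{x^{k+1}\}\subset\mathcal L_\psi(x^0)\subset\dom\phi$, and lower semicontinuity together with $f(x^*)=\lim_K f(x^k)$ and $\psi(x^{k+1})\leq\psi(x^0)$ ensures $\phi(x^*)<\infty$, so $x^*\in\dom\phi$. The continuity of $\phi$ on its domain therefore yields $\phi(x^{k+1})\to_K\phi(x^*)$, so $x^{k+1}\to_{\phi} x^*$ along $K$, and robustness of the limiting subdifferential closes the proof. (Note that \cref{Ass:ProxGradNonmonotone}~\ref{item:continuity_phi} makes the locally Lipschitz alternative for $\nabla f$ from \cref{Thm:ConvProxGrad} unnecessary here; uniform continuity of $\psi$ was already consumed by \cref{Prop:xdiff_NM}.)
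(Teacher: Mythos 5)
Your proof is correct and follows the paper's own argument closely: the same auxiliary point $\hat x^k=x^{k,i_k-1}$ with $\hat\gamma_k=\gamma_k/\tau$, the same use of \cref{Cor:level_set_condition_nonmonotone_framework} to supply the bound $\psi(x^k)\le\psi(x^0)$ in place of monotonicity, the same key observation that a violated nonmonotone criterion implies a violated monotone one because $\max_{j=0,\ldots,m_k}\psi(x^{k-j})\ge\psi(x^k)$, and the same use of \Cref{Ass:ProxGradNonmonotone}~\ref{item:continuity_phi} to obtain $\phi(x^{k+1})\to_K\phi(x^*)$ and pass to the limit in \eqref{Eq:OptCondSub}. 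The one genuine difference is how the stepsizes are controlled. The paper proves boundedness of $\{\gamma_k\}_K$ by contradiction via a Taylor-remainder estimate, which forces a case split: when $\hat\gamma_k\|\hat x^k-x^k\|\to_{K'}0$ it concludes M-stationarity directly from the optimality condition at $\hat x^k$, and only in the complementary case does it reach the contradiction; $\gamma_k(x^{k+1}-x^k)\to_K0$ is then deduced from boundedness. You instead establish the full nonmonotone analogue of \cref{Prop:gammaxdiff}: the mean-value form of the estimate yields $(1-\delta)\tfrac{\hat\gamma_k}{2}\|\hat x^k-x^k\|\le\|\nabla f(x^k)-\nabla f(\xi^k)\|\to_K0$ with no case distinction, and the comparison of the two subproblem optimal values transfers this to $\gamma_k\|x^{k+1}-x^k\|\to_K0$ even along subsequences where $\gamma_k\to\infty$. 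This is marginally cleaner, since it delivers the second assertion of the theorem uniformly, whereas the paper's first branch formally establishes only the M-stationarity claim; both routes rest on the same inequalities and the same assumptions.
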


\begin{proof}
Since $ \{ x^k \}_K $ is a subsequence converging to $ x^* $, it follows from
\cref{Prop:xdiff_NM} that also the subsequence $ \{ x^{k+1} \}_K $
converges to $ x^* $.
	We note that $x^*\in\dom\phi$ follows from
	\cref{Cor:level_set_condition_nonmonotone_framework}
	by closedness of $\mathcal L_\psi(x^0)$.
The minimizing property of $ x^{k+1} $ for \eqref{Eq:NonSubki} together
with Fermat's rule and the sum rule from \eqref{eq:regular_sum_rule} imply that 
the necessary optimality condition \eqref{Eq:OptCondSub}
holds for each $k\in K$. We claim that the subsequence $ \{ \gamma_k \}_K $ is bounded. Assume, by contradiction,
that this is not true. Without loss of generality, let us assume that $ \gamma_k \to_K \infty $
	and that the acceptance criterion \eqref{Eq:NonStepCrit} is violated in the first
	iteration of the inner loop for each $k\in K$.
Setting $ \hat{\gamma}_k := \gamma_k / \tau $ for each $k\in K$, $\{\hat\gamma^k\}_K$ also tends to infinity, but the 
corresponding vectors $ \hat x^k := x^{k, i_k-1} $, $k\in K$, do not satisfy the stepsize
condition from \eqref{Eq:NonStepCrit}, i.e., we have
\begin{equation}\label{Eq:ContraStepSub}
   \psi (\hat x^k) > \max_{j=0, 1, \ldots, m_k}
   \psi (x^{k-j}) - \delta \frac{\hat{\gamma}_k}{2} \| \hat{x}^k - x^k \|^2 
   \qquad\forall k\in K.
\end{equation}
On the other hand, since $ \hat{x}^k = x^{k,i_k-1} $ solves the corresponding 
subproblem \eqref{Eq:Subki} with $ \hat{\gamma}_k = \gamma_{k, i_k-1} $, we have
\begin{equation}\label{Eq:optimality_subproblem_previous_inner_iteration}
   \langle \nabla f (x^k),\hat{x}^k - x^k\rangle + \frac{\hat{\gamma}_k}{2}
   \| \hat{x}^k - x^k \|^2 + \phi ( \hat{x}^k ) \leq \phi (x^k) 
\end{equation}
for each $k\in K$.
Due to $ \hat{\gamma}_k \to_K \infty $ and since $ \phi $ is bounded from below by an affine
function due to \cref{Ass:ProxGradMonotone}~\ref{item:phi_bounded_affine} while $ \phi $ 
is continuous on its domain by \cref{Ass:ProxGradNonmonotone}~\ref{item:continuity_phi}
(which yields boundedness of the right-hand side of \eqref{Eq:optimality_subproblem_previous_inner_iteration}), 
this implies
$ \hat x^k - x^k \to_K 0 $. Consequently, we have $\hat{x}^k\to_K  x^* $ as well.

Now, if $\hat\gamma_k\|\hat x^k-x^k\|\to_{K'} 0$ holds along a subsequence 
	$k\to_{K'}\infty$ such that $K'\subset K$,
then, due to
\[
   0\in \nabla f(x^k)+\hat\gamma_k(\hat x^k-x^k)+\widehat\partial\phi(\hat x^k),
\] 
which holds for each $k\in K'$ by means of Fermat's rule and the sum rule \eqref{eq:regular_sum_rule},
we immediately see that $x^*$ is an M-stationary point of \eqref{Eq:P} by taking the limit $k\to_{K'}\infty$ 
and exploiting the continuity of $\phi$ on $\dom\phi$ from \cref{Ass:ProxGradNonmonotone}~\ref{item:continuity_phi}. 
Thus, for the remainder of the proof, we may assume 
that there is a constant $c>0$ such that 
\begin{equation*}
   \hat{\gamma}_k \| \hat{x}^k - x^k \| \geq c
\end{equation*}
holds for each $k\in K$. Further, we then also get 
\begin{equation*}
   ( 1- \delta ) \frac{\hat{\gamma}_k}{2} \| \hat{x}^k - x^k \|^2 \geq
   \frac{1- \delta}{2} c \| \hat{x}^k - x^k \| \geq o \big( \| \hat{x}^k - x^k \| \big)
\end{equation*}
for all $ k \in K $ sufficiently large. 
Rearranging \eqref{Eq:optimality_subproblem_previous_inner_iteration} gives us
\[
   \langle\nabla f(x^k) ,\hat{x}^k - x^k \rangle + \phi (\hat{x}^k) - \phi (x^k)
   \leq
   -\frac{\hat{\gamma}^k}{2} \| \hat{x}^k- x^k \|^2 
\]
for each $k\in K$.
From the mean-value theorem, we obtain some $\xi^k$ on the line segment between $\hat x^k$ and $x^k$ such that
\begin{align*}
   &\psi ( \hat{x}^k) - \max_{j= 0, 1, \ldots, m_k} \psi (x^{k-j}) \\
   &\qquad 
   \leq 
   \psi ( \hat{x}^k) - \psi (x^k) \\
   &\qquad 
   =  
   \langle \nabla f( \xi^k) , \hat{x}^k - x^k \rangle + \phi ( \hat{x}^k ) - \phi (x^k) \\
   & \qquad
   =  
   \langle \nabla f(x^k) , \hat{x}^k - x^k \rangle + \phi ( \hat{x}^k ) - \phi (x^k) +
   \langle \nabla f ( \xi^k) - \nabla f(x^k) , \hat{x}^k - x^k \rangle \\
   & \qquad
   \leq  
   - \frac{\hat{\gamma}^k}{2} \| \hat{x}^k- x^k \|^2 + o (\| \hat{x}^k - x^k \|) \\
   &\qquad 
   \leq  
   - \delta \frac{\hat{\gamma}^k}{2} \| \hat{x}^k- x^k \|^2
\end{align*}
for all $ k \in K $ sufficiently large. This contradiction to \eqref{Eq:ContraStepSub}
shows that the sequence $ \{ \gamma_k \}_K $ is bounded.

Finally, the continuity of $\phi$ from \cref{Ass:ProxGradNonmonotone}~\ref{item:continuity_phi} gives
$\phi(x^{k+1})\to_K\phi(x^*)$ due to $x^{k+1}\to_K x^*$.
Thus, recalling $x^k\to_K x^*$ and the boundedness of $\{\gamma_k\}_K$,
we find $\gamma_k(x^{k+1}-x^k)\to_K 0$, and
taking the limit $k\to_K\infty$ in \eqref{Eq:OptCondSub} gives us 
M-stationarity of $x^*$ for \eqref{Eq:P}.
\end{proof}

\begin{remark}\label{rem:remarks_regarding_ProxGradNM}
\begin{enumerate}
	\item Note that \cref{Ass:ProxGradMonotone,Ass:ProxGradNonmonotone} do not comprise any Lipschitz
		conditions on $\nabla f$.
	\item The results in this section recover the findings from \cite[Section~4]{GuoDeng2021} and \cite[Section~3]{JiaKanzowMehlitzWachsmuth2021}
		which were obtained in the special situation where $\phi$ is the indicator function associated with a closed set,
		see \cref{Rem:constrained_opt} as well.
	\item Based on \cref{Thm:ConvProxGradNM}, \eqref{eq:termination} also provides a reasonable
		termination criterion for \cref{Alg:NonmonotoneProxGrad}, see \cref{rem:termination_MonotoneGroxGrad}
		as well.
    \item In view of \cref{Prop:xdiff_NM}, it follows in the same way as in 
        \cref{rem:conv_entire_sequence} that the entire sequence $ \{ x^k \} $ generated
        by \cref{Alg:NonmonotoneProxGrad} converges if there exists an isolated 
        accumulation point.
\end{enumerate}
\end{remark}

The uniform continuity of $ \psi $ which is demanded in \cref{Ass:ProxGradNonmonotone}~\ref{item:uniform_continuity} 
is obviously a much stronger assumption than the 
one used in the previous section for the monotone proximal gradient method. In particular,
this assumption rules out applications where $ \phi $ is given by the $ \ell_0 $-quasi-norm. Nevertheless,
the theory still covers the situation where the role of $ \phi $ 
is played by an $\ell_p$-type penalty function for $p\in(0,1)$ over $\mathbb X\in\{\R^n,\R^{n\times m}\}$
which is known to promote sparse solutions.
More precisely, this choice is popular in sparse optimization if
the more common $ \ell_1 $-norm does not provide satisfactory sparsity results, and 
the application of the $ \ell_0 $-quasi-norm seems too difficult,
see \cite{BianChen2015,Chartrand2007,ChenGuoLuYe2017,DeMarchiJiaKanzowMehlitz2022,LiuDaiMa2015,MarjanovicSolo2012} for some applications and numerical results
based on the $ \ell_p $-quasi-norm or closely related expressions.
We would like to note that uniform continuity is a 
standard assumption in the context of nonmonotone stepsize rules involving acceptance criteria of type \eqref{Eq:NonStepCrit},
see \cite[page 710]{GrippoLamparielloLucidi1986}.

We close this section with a discussion on existing convergence results for nonmonotone
proximal gradient methods. To the best of our knowledge, the first one can be found 
in \cite{WrightNowakFigueiredo2009}.
The authors prove convergence under the assumptions that $ f $ is differentiable with
a globally Lipschitz continuous gradient and $ \phi $ being real-valued and convex,
see \cite[Section~II.G]{WrightNowakFigueiredo2009}.
Implicitly, however, they also exploit the uniform continuity of $ \psi = f + \phi $ in
their proof of \cite[Lemma~4]{WrightNowakFigueiredo2009}, a result like \Cref{Prop:xdiff_NM}, without stating this assumption explicitly.
Taking this into account, our \Cref{Ass:ProxGradNonmonotone}~\ref{item:uniform_continuity} is actually weaker than
the requirements used in \cite{WrightNowakFigueiredo2009}, so that the results of this section can be viewed
as a generalization of the convergence theory from \cite{WrightNowakFigueiredo2009}.

Furthermore, \cite[Section~3.1]{ChenGuoLuYe2017} and \cite[Appendix~A]{ChenLuPong2016} consider a nonmonotone proximal gradient method
which is slightly different from \Cref{Alg:NonmonotoneProxGrad} since the acceptance criterion \eqref{Eq:NonStepCrit} is replaced by
the slightly simpler condition
\[
         \psi (x^{k,i}) \leq \max_{j=0, 1, \ldots, m_k}
         \psi (x^{k-j}) - \frac{\delta}{2} \| x^{k,i} - x^k \|^2.
\]
In \cite[Theorem~4.1]{ChenLuPong2016}, the authors obtain convergence
to M-stationary points whenever $\psi$ is bounded from below as well as uniformly continuous on the level set $\mathcal L_\psi(x^0)$,
$f$ possesses a Lipschitzian derivative on some enlargement of $\mathcal L_\psi(x^0)$, and $\phi$ is continuous.
Clearly, our convergence analysis of \cref{Alg:NonmonotoneProxGrad} does not exploit any Lipschitzianity of $\nabla f$, so our assumptions
are weaker than those ones used in \cite{ChenLuPong2016}. 
In \cite[Theorem~3.3]{ChenGuoLuYe2017}, the authors claim that the results from \cite{ChenLuPong2016} even hold when 
the continuity assumption on $\phi$ is dropped. The proof of \cite[Theorem~3.3]{ChenGuoLuYe2017}, however, 
relies on the outer semicontinuity property \eqref{Eq:osc} of the limiting subdifferential, which does not hold
for general discontinuous functions $ \phi $, so this result is not reliable.  

Finally, let us mention that the two references \cite{LiLin2015,WangLiu2021} also 
consider nonmonotone (and accelerated) proximal gradient methods. These methods are not
directly comparable to our algorithm since they are based on a different kind of 
nonmonotonicity. In any case, although the analysis in both papers works for
merely lower semicontinuous functions $\phi$, the provided convergence theory requires 
$ \nabla f $ to be globally Lipschitz continuous.

\section{Conclusions}\label{Sec:Final}

In this paper, we demonstrated how the convergence analysis for monotone and nonmonotone proximal gradient
methods can be carried out in the absence of (global) Lipschitz continuity of the derivative associated with
the smooth function. Our results, thus, open up these algorithms to be reasonable candidates
for subproblem solvers within an augmented Lagrangian framework for the numerical treatment of constrained
optimization problems with lower semicontinuous objective functions, see e.g.\ \cite{ChenGuoLuYe2017}
where this approach has been suggested but suffers from an incomplete analysis,
and \cite{GuoDeng2021,DeMarchiJiaKanzowMehlitz2022,JiaKanzowMehlitzWachsmuth2021} where this approach has
been corrected and extended.

Let us mention some remaining open problems regarding the investigated proximal gradient methods.
First, it might be interesting to find minimum requirements which ensure global convergence of
\cref{Alg:MonotoneProxGrad,Alg:NonmonotoneProxGrad}. We already mentioned in \cref{Sec:GenSpecGrad} that
it is an open question whether the convergence analysis for \cref{Alg:MonotoneProxGrad} can be generalized
to the setting where $f$ is only continuously differentiable while $\phi$ is just lower semicontinuous.
Second, we did not investigate if the \emph{Kurdyka--\L ojasiewicz} property could be efficiently
incorporated into the convergence analysis in order to get stronger results even in the absence of strong
Lipschitz assumptions on the derivative of $f$. 
Third, our analysis has shown that \cref{Alg:MonotoneProxGrad,Alg:NonmonotoneProxGrad} compute M-stationary
points of \eqref{Eq:P} in general. In the setting of \cref{rem:non_Lipschitz_constrained_optimization}, i.e., where constrained
programs with a merely lower semicontinuous objective function are considered, the introduced concept of M-stationarity 
is, to some extent, \emph{implicit} since it comprises an unknown subdifferential.
In general, the latter can be approximated from above in terms of initial problem data only in situations
where a qualification condition is valid. The resulting stationarity condition may be referred to as \emph{explicit}
M-stationarity. It seems to be a relevant topic of future research to investigate
whether \cref{Alg:MonotoneProxGrad,Alg:NonmonotoneProxGrad} can be modified such that they compute explicitly
M-stationary points in this rather general setting.
	Fourth, it might be interesting to investigate whether other types of nonmonotonicity, different from the one used
	in \cref{Alg:NonmonotoneProxGrad}, can be exploited in order to get rid of the uniform continuity requirement from
	\cref{Ass:ProxGradNonmonotone}\,\ref{item:uniform_continuity}.

Finally, we note that there exist several generalizations of proximal gradient methods using, e.g.,
inertial terms and Bregman distances, 
see e.g.\ \cite{BauschkeBolteTeboulle2017,BolteSabachTeboulleVaisbourd2018,BotCsetnek2016,BotCsetnekLaszlo2016} and the references
therein. The corresponding 
convergence theory is also based on a global Lipschitz assumption for the gradient of
the smooth term or additional convexity assumptions which allow the application of a 
descent-type lemma. It might be interesting to see whether our technique of
proof can be adapted to these generalized proximal gradient methods in order to weaken
the postulated assumptions.


\end{document}